\def\be{\begin{equation}}
\def\ee{\end{equation}}
\def\rar{\rightarrow}
\newtheorem{theorem}{Theorem}
\newtheorem{example}[theorem]{Example}
\newenvironment{proof}[1][Proof]{\textbf{#1.} }{\ \rule{0.5em}{0.5em}}
\def\C{\mathbb{C}}
\begin{document}

\begin{center}

\textbf{SPECTRAL COMPLEXITY OF DIRECTED GRAPHS AND APPLICATION TO STRUCTURAL DECOMPOSITION}

\medskip

\emph{Igor Mezi\'c$^{\S 1,2}$, Vladimir A. Fonoberov$^3$, Maria Fonoberova$^2$, Tuhin Sahai$^4$}

\medskip

$^\S$Corresponding author

$^1$ Center for Control, Dynamical Systems and Computation, University of California - Santa Barbara, Santa
Barbara, CA 93106, USA, tel: 1-805-893-7603, fax: 1-805-893-8651, e-mail: mezic@engineering.ucsb.edu

$^2$ Aimdyn, Inc., 1919 State St., Ste. 207, Santa Barbara, CA 93101, USA

$^3$ Bruker Nano, 112 Robin Hill Rd, Goleta, CA 93117, USA

$^4$ United Technologies Research Center, 2855 Telegraph Ave, Suite 410, Berkeley, CA 94115, USA

\end{center}
\begin{abstract}
We introduce a new measure of complexity (called \textit{spectral complexity}) for directed graphs. We start with splitting of the directed graph into its recurrent and non-recurrent parts. We define the spectral complexity metric
 in terms of the spectrum of the recurrence matrix (associated with the reccurent part of the graph) and the Wasserstein
distance. We show that the total complexity of the graph can then be defined in terms of the spectral
complexity, complexities of individual components and edge weights. The essential property of the spectral complexity metric
is that it accounts for directed cycles in the graph. In engineered and software systems, such cycles give rise to sub-system interdependencies and increase
risk for unintended consequences through positive feedback loops, instabilities, and infinite execution loops in software.  In addition, we present a
structural decomposition technique that identifies such cycles using a spectral technique. We show that this
decomposition complements the well-known spectral decomposition analysis based on the Fiedler vector. We provide several examples of computation
of spectral and total complexities, including the demonstration that the complexity increases monotonically with
the average degree of a random graph. We also provide an example of spectral complexity computation for the
architecture of a realistic fixed wing aircraft system. 
\end{abstract}

\textbf{Keywords:} spectral complexity, directed graphs, recurrence matrix, graph complexity, graph decomposition, cycles in graphs

\section{Introduction}
Given that complex engineering systems are constructed by composing various subsystems and components that interact
with one another, it is common practice in modern engineering design to consider the directed interconnectivity
graph as a representation of the underlying system~\cite{dsm}. Thus, the question of inferring complexity of a given system
from the resulting graph arises naturally; the idea being that higher complexity graphs imply higher complexity of
system design and testing procedures~\cite{npscomplexity}. System complexity is particularly important in the context of complex aerospace systems and leads to frequent budget overruns and project delays~\cite{npscomplexity,f35}. Thus, early identification of complexity levels can enable early intervention and system redesign to mitigate risk.

A graph can be analyzed using either combinatorial graph-theoretic methods or by  matrix representations such as the adjacency matrix. In the latter case, algebraic methods for analysis are available. In particular, the spectrum of the matrix associated with an undirected graph can be related to its structural properties \cite{Cvetkovicetal:1979, chung_book}. Previously, the graph spectrum has been used to compute properties such as clusters~\cite{Tutorial,wave-clustering1} and  isomorphisms~\cite{isospectral}. Unfortunately, such relationships are not readily available in the case of directed graphs that arise frequently in typical engineering applications (and in various social network settings) due to the directionality of flow information or energy.

In directed graph theory, a common source of complexity is the existence of directed cycles in the graph.  This led
Thomas J. McCabe in 1976 to measure the complexity of a computer program \cite{mccabe,mccabe89}, using
the so-called \textit{cyclomatic complexity} - which counts the number of linearly independent cycles in the
program. A good survey on software system complexity metrics can be found in \cite{navlakha, shepperd}. We argue that these cycles are particularly important in the context of engineering systems. In particular, they are important drivers of complexity. For example, these cycles can give rise to positive feedback loops~\cite{astrom2010feedback} which lead to system instabilities. Cycles in engineering systems also make design and analysis challenging from a simulation convergence perspective~\cite{moshagen2017convergence, klus2011efficient}.

Inspired by the above argument, we develop a class of complexity metrics based on the algebraic properties of a matrix that
represents the underlying directed graph. Although there exists extensive literature on graph complexity measures of information-theoretic and energy type
\cite{DehmerandMowshowitz:2011,Dehmeretal:2014}, such measures can
either directly or indirectly be related to the moduli of eigenvalues of the underlying graph matrices. Our
approach is based on ideas that are fundamentally different from the underlying concept present in the above works. Namely, we start with the postulate that the complexity of a system should be a measure of the distance from the
least complex system of the same size. We assume that the least complex system is the one where every component is
isolated, not interacting with any other component (thus lacks any interdependencies). We develop our ``spectral complexity'' metric by using a Wasserstein-type distance on
spectral distribution of the recurrence matrix of the directed graph (for an application of such an approach to
measure uncertainty, see \cite{MezicandRunolfsson:2008}).

Based on the above spectral complexity approach, we then develop a novel graph decomposition technique that is based on cyclic interaction between subsystems and does not resort to  symmetrization of the underlying matrices. This approach facilitates the identification of strongly interacting subsystems that can be used for design and analysis of complex systems. In particular, our goal is to group subsystems that should be co-designed or co-analyzed. Our methodology can be viewed as a complementary approach to Fiedler based methods and can also be used to provide graph sparsification \cite{Cohenetal:2017}.

The problem of structural decomposition, clustering or partitioning graphs (or data) into disjoint groups, is a problem that arises in numerous and diverse applications such as, social anthropology \cite{Kottak}, gene networks \cite{GeneSpectral}, protein sequences \cite{ProteinSpectral}, sensor networks \cite{Ali}, computer graphics \cite{CompGraph} and Internet routing algorithms~\cite{KempeMcSherry08}. In general, the problem of clustering requires one to group a set of objects such that each partition contains similar objects, or objects that are ``close'' to one another with respect to an appropriate metric. Alternatively, graph partitioning can be mathematically posed as the set of approaches that minimize the number of edges that cross from one subgroup of nodes to another while maintaining a balanced decomposition~\cite{Tutorial}.

Graph clustering is a well studied topic and spectral clustering has emerged as a very popular approach for decomposing graphs~\cite{Tutorial}. These methods
for clustering graphs use the eigenvalues and eigenvectors of the graph Laplacian matrix to assign nodes to
clusters~\cite{Tutorial}. The theoretical justification for these methods was given by M. Fiedler (see~\cite{fiedler73, fiedler75}). In spectral graph partitioning, one computes the eigenvector corresponding to
the smallest non-zero eigenvalue of the Laplacian matrix. This eigenvector is known as the Fiedler vector
\cite{Tutorial} and is related to the minimum cut in undirected graphs \cite{fiedler73, fiedler75}. The
signs of the components of the Fiedler vector can be used to determine the cluster assignment for the nodes in the
graph \cite{Tutorial}.

The drawback of spectral clustering and other traditional partitioning methods is that they are restricted to
undirected graphs \cite{Tutorial} (they assume that the adjacency matrix is symmetric). The problem of clustering
undirected graphs has been well studied (we refer the reader to \cite{chung_book, wave-clustering1, biggs, everitt,jain,wave-clustering2,schaeffer}). However, for many applications, the adjacency matrix
resulting from the underlying graph representation is not symmetric. Examples include, engineering systems \cite{zeidner2},
social networks, citation networks, Internet communications, and the World Wide Web to name a few~\cite{web}.

The theory for spectral partitioning of directed graphs has not been developed as extensively as that for
undirected graphs~\cite{brualdi:2010}. In~\cite{chung}, the graph Laplacian for directed graphs is defined and its
properties are analyzed. The Cheeger inequality for directed graphs is also derived in~\cite{chung}. In~\cite{gleich} the author extends the work in \cite{chung} to partition directed graphs. A method for clustering
directed weighted graphs, based on correlations between the elements of the eigenvectors is given in
\cite{capoccia}. In \cite{mp}, spectral clustering for directed graphs is formulated as an optimization problem.
Here the objective function for minimization is the weighted cut of the directed graph. In \cite{leicht},
communities or modules in directed networks are found by maximizing the modularity function over all possible
divisions of a network. The heuristic for this maximization is based on the eigenvectors of the corresponding
modularity matrix. Recently, in~\cite{yin2017local}, the authors develop a fast local approach to decompose graphs using network motifs. There are recent papers that 
consider complex eigenvalues of the graph transition matrix to achieve clustering \cite{vanlierdeetal:2018,klymkoandsanders:2016}. While \cite{klymkoandsanders:2016} concentrates on 3-cycles in a directed graph, our methods enable detection of {\it more general, almost-cyclic structures}. The spectral decomposition that we develop in this paper looks beyond the Fiedler vector for partitioning. We utilize complex eigenvalues of the graph transition matrix to identify underlying cycling behavior. The methods of \cite{vanlierdeetal:2018} are closer to ours.\footnote{The paper \cite{vanlierdeetal:2018} appeared in print and on arXiv after our submission. Also, the clustering methodology we provide was first disclosed in an internal report to DARPA \cite{DARPA_project}. We provide a different algorithm for clustering, and provide a more general theoretical justification for the method based on the work in \cite{Mezic:2004}. }

The paper is organized as follows. In section~\ref{comp} we introduce the idea of spectral complexity of a directed graph. We compare the new measure of complexity to the
standard graph energy complexity metric used in literature. In section~\ref{clust} we propose an approach for partitioning
directed graphs that groups nodes into clusters that tend to map into one another (i.e. form ``almost cycles''). In section 4
we give examples and compare our results with existing methods.

\section{Spectral Complexity}
\label{comp}

The key idea underlying our methodology is that every digraph $G=(V,A,B)$, where 
$V$ is a set of nodes, $A$ is a set of directed edges, and $B$ is a set of weights, can be represented  using a multi-valued (one-to-many) map $T:V\rightarrow V$ that maps  node $i$ to a set of nodes $j\in V_i$, with the associated probabilities $p_{ij}=\beta_{ij}/\sum_j \beta_{ij}$, $\beta_{ij}$ being weights.  If a node is a sink, and has no edges, we set $p_{ii}=1$. We consider the weighted adjacency matrix $U$ whose $i,j$ element is $p_{ij}$. This matrix is analogous to the Koopman operator in dynamical systems \cite{Koopman:1931,MezicandBanaszuk:2004}. 

We can decompose  the state-space of one-to-many maps into the recurrent set $V_r$ and non-recurrent set $V_{nr}$. We define the recurrent set as the set of all the points $k \in V$ such that {\it every} orbit that starts at $k$  lands in $k$ some time later. The rest of $V$ is the transient (non-recurrent) set. 
Obviously, the row-stochastic matrix $U$ has its restriction $R$ to the recurrent set, where $R$ is obtained from $U$ by deleting the rows and columns corresponding to transient set nodes. Note that $R$ is also row-stochastic, since nodes in the recurrent set have $0$ probability of transitioning to the transient set. We call $T$ irreducible if we can get from any initial state $k$ to any final state $l$, i.e. $l \in T^n(k)$ for some $n$ and every $k,l$. $V$ can be split into irreducible components. It can be shown that on each irreducible component, every state has the same {\it period} where in case the period is the greatest common divisor of all $n$ such that $k\in T^n(k)$ \cite{grimmettandstirzaker:2001}. We identify complex vectors  with elements $v_j,j\in V$ with functions $f:V\rightarrow \C$ such that $f(j)=v_j,j\in V$. The level set of $f$ is a set $C_c$ in $V$ such $f=c, c\in\C$ on $C_c$, i.e. the function has a constant value on $C$.  In the following, we will use the notion of period $p=k/j$, where $k,j$ are integer and $k\geq j$ to mean $p=k$ if $k/j$ is not an integer, and $k/j$ otherwise. We have the following theorem:
\begin{theorem} Let $T$ be irreducible of period $d$. Then:
\begin{enumerate}
\item $\lambda_1=1$ is an eigenvalue of $U$ and $R$. The eigenspace of $\lambda_1$ is one-dimensional and consists of constant functions.
\item $\lambda_{jd}=e^{i2\pi j/d}$ is an eigenvalue of $U$ and $R$, where $j=1,...,d$. The eigenspaces associated with each of these consist of vectors whose level sets define an invariant partition of period that is equal to $d/j$.
\item The remaining eigenvalues of $U$ satisfy $|\lambda_j|<1$.
\item If there is a pure source node, then $0$ is in the spectrum of $U$.
\end{enumerate}
\label{the:1}
\end{theorem}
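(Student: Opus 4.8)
The statement is the Perron--Frobenius theorem for irreducible non-negative matrices, specialized to the row-stochastic setting, so the plan is to pin down the Perron root first, then extract the peripheral spectrum from the cyclic structure, and finally dispose of the interior eigenvalues and the source. Since $U$ is row-stochastic we have $U\mathbf{1}=\mathbf{1}$, so $\lambda_1=1$ is immediately an eigenvalue with the constant function as eigenvector, and $\|U\|_\infty=1$ forces the spectral radius to equal $1$. I would then invoke Perron--Frobenius on the irreducible recurrence matrix $R$: the eigenvalue $1$ is simple and its eigenvector is, up to scale, strictly positive; in the stochastic case that Perron eigenvector is exactly the constant function, so the eigenspace is one-dimensional and spanned by constants. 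This is part 1, valid for $R$ and inherited by $U$.

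For part 2 the key is to use the period to produce cyclic classes. By the definition of period $d$ as the greatest common divisor of return times, the recurrent class partitions into $V_0,\dots,V_{d-1}$ with $T(V_k)\subseteq V_{k+1\bmod d}$; equivalently $R$ can be permuted into block-cyclic normal form. For each $j$ I would set $v$ equal to $e^{i2\pi jk/d}$ on $V_k$ (constant on each class, using that the Perron vector is constant). A direct computation from $T(V_k)\subseteq V_{k+1\bmod d}$ and $\sum_\ell p_{i\ell}=1$ then shows $Uv=e^{i2\pi j/d}v$, so each $d$-th root of unity is an eigenvalue. The level sets of $v$ are the unions of classes sharing a common phase: $V_k$ and $V_{k'}$ lie on the same level set iff $k\equiv k'\pmod{d/\gcd(j,d)}$, and $T$ permutes these level sets cyclically with period equal to the order of $e^{i2\pi j/d}$, which under the paper's period convention is $d/j$. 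That yields the invariant partition claimed.

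For parts 3 and 4 I would argue as follows. By the Perron--Frobenius theorem for imprimitive matrices, the only eigenvalues of $R$ on the unit circle are the $d$ roots of unity above, and every other eigenvalue of $R$ satisfies $|\lambda|<1$. Writing $U$ in block-triangular form separating the recurrent class from the transient set, the transient diagonal block is strictly substochastic, since every transient orbit eventually enters the recurrent set, and hence has spectral radius strictly less than $1$; the spectrum of $U$ is therefore the union of the recurrent spectrum with eigenvalues of modulus $<1$, which is part 3. Finally, a pure source node has no incoming edges, so its corresponding column of $U$ is identically zero, making $U$ singular and placing $0\in\mathrm{spec}(U)$, which is part 4.

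The hard part will be the two reconciliations rather than the Perron--Frobenius input itself, which is standard. First, the hypothesis that $T$ is irreducible cannot apply literally to $U$ once a pure source is present (a source destroys irreducibility), so I would read irreducibility as a property of the recurrent restriction $R$ while allowing $U$ to carry transient and source nodes; the block-triangular escape argument above is what makes parts 3 and 4 consistent with parts 1 and 2. Second, the level-set bookkeeping in part 2 requires carefully matching the order of the root of unity $e^{i2\pi j/d}$ to the paper's nonstandard period notation $d/j$, and this is where the stochastic normalization (constancy of the Perron eigenvector) does the real work.
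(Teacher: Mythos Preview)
Your proposal is correct and, for parts 1, 3, and 4, essentially coincides with the paper's argument: both invoke Perron--Frobenius for the simplicity of the Perron root and the location of the remaining spectrum, and both observe that the indicator of a pure source is annihilated by $U$ (equivalently, the corresponding column vanishes).

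The genuine difference is in part 2. You obtain the $d$-th roots of unity by writing down the cyclic classes $V_0,\dots,V_{d-1}$ explicitly and exhibiting the eigenvector $v|_{V_k}=e^{i2\pi jk/d}$, then reading off the invariant partition from its level sets via the gcd calculation. The paper instead cites Theorem~15 of Mezi\'c--Banaszuk (2004): it views $T$ as a discrete random dynamical system, notes that period-$d$ Markov chains carry the eigenvalues $e^{i2\pi j/d}$, and then uses that external result to conclude that each associated eigenfunction is a \emph{deterministic factor map}, whose level sets automatically give the claimed invariant partition with the stated period. Your route is more elementary and entirely self-contained; the paper's route is shorter on the page but imports the structural content from the factor-map theorem, which in return situates the result within a broader Koopman/random-dynamical-systems framework that the rest of the paper draws on. Your closing remarks about the tension between irreducibility of $T$ and the presence of a source, and about the paper's nonstandard $d/j$ period convention, are well taken and accurately diagnose where the statement is informal.
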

\begin{proof}
Items $1.$ and $3.$ are a simple consequence of the Perron-Frobenius theorem \cite{katok}. Item 2. follows from the observation \cite{grimmettandstirzaker:2001} that a Markov chain with period $d$ possesses eigenvalues $\lambda_{jd}=e^{i2\pi j/d}$, and from the fact that $T$ is a Discrete Random Dynamical System \cite{MezicandBanaszuk:2004}. Then,  Theorem 15 implies\footnote{Note that, following the proof in Appendix 1 of \cite{MezicandBanaszuk:2004} the reversibility condition can be relaxed.} that the associated eigenfunction $f_{j/d}$ is a deterministic factor map of $T$. The theorem also implies that the state space $V$ splits into sets on which $f_{j/d}$ has constant value. The number of such sets is $d$ provided $d/j$ is not an integer, and $d/j$ if it is. The last statement follows from the fact that if $i$ is a source node, then a vector that is $1$ on $i$ and $0$ on all other nodes gets mapped to $0$ by $U$.
\end{proof}

If $T$ is not irreducible, it can always be split into irreducible components, and then Theorem \ref{the:1}
 can be applied on each component.
In Theorem \ref{the:1} the cycle of order $d$ is identified and its eigenvectors serve to partition the graph by using their level sets. The lower order cycles are also associated with an eigenvalue and an associated partition:
\begin{theorem}  If $\lambda_{jd}=e^{i2\pi j/n}$ is an eigenvalue of U or $R$, where $n\leq d$ then the eigenspace associated with it consist of vectors whose level sets define an invariant partition of period that is equal to $n/j$.
\label{the:2}
\end{theorem}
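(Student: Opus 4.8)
The plan is to follow the same line of argument used for item 2 of Theorem~\ref{the:1}, since Theorem~\ref{the:2} is its natural generalization from the full period $d$ to an arbitrary $n\leq d$. The starting point is that $\lambda_{jd}=e^{i2\pi j/n}$ has unit modulus, so its eigenfunction can neither decay nor grow under the dynamics and must instead permute structure in $V$. Concretely, I would first record that $T$ is a Discrete Random Dynamical System \cite{MezicandBanaszuk:2004}, so that the spectral machinery developed there applies verbatim to $U$ (and, after deleting transient rows and columns, to $R$).

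The core step is to invoke Theorem~15 of \cite{MezicandBanaszuk:2004} to conclude that the eigenfunction $f_{j/n}$ associated with $\lambda_{jd}=e^{i2\pi j/n}$ is a \emph{deterministic} factor map of $T$; as in the proof of Theorem~\ref{the:1}, the reversibility hypothesis there can be relaxed by following the argument in Appendix~1 of \cite{MezicandBanaszuk:2004}. Determinism is what makes the level sets of $f_{j/n}$ well behaved: since $f_{j/n}$ takes a constant value on each level set and $U f_{j/n}=e^{i2\pi j/n} f_{j/n}$, the map $T$ must send each level set $\{f_{j/n}=c\}$ onto the level set $\{f_{j/n}=e^{i2\pi j/n}c\}$. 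Hence $T$ cyclically permutes the level sets, and the collection of level sets is an invariant partition of $V$.

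It then remains to count the level sets and identify the period of this cyclic permutation. Because $T$ acts on the nonzero values of $f_{j/n}$ by multiplication by $e^{i2\pi j/n}$, the number of distinct level sets in a single cycle equals the multiplicative order of $e^{i2\pi j/n}$. Writing the exponent in lowest terms, this order is reproduced by the period convention $p=n/j$ introduced above (equal to $n$ when $n/j$ is not an integer and to $n/j$ when it is), which yields an invariant partition of period $n/j$, as claimed; the bound $n\leq d$ guarantees that such a root of unity can indeed occur in the spectrum of an irreducible chain of period $d$.

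I expect the main obstacle to be the deterministic factor step rather than the counting. Verifying that $f_{j/n}$ is a genuine deterministic factor (and not merely a stochastic one) is where the structure of $T$ as a one-to-many map is essential, and it is the precise point at which the reversibility relaxation of Appendix~1 of \cite{MezicandBanaszuk:2004} must be checked in the present non-symmetric setting. A secondary care point is matching the cycle length to the stated convention for $p=n/j$, in particular keeping the integer and non-integer cases separate so that the reported period is consistent with item~2 of Theorem~\ref{the:1}.
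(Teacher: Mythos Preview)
Your proposal is correct and follows essentially the same approach as the paper: both arguments reduce the claim to Theorem~15 of \cite{MezicandBanaszuk:2004} applied to $T$ viewed as a random dynamical system on the discrete space $V$, with the reversibility hypothesis relaxed via Appendix~1 of that reference. The paper's proof is a one-line citation, whereas you have unpacked the mechanism (deterministic factor, cyclic permutation of level sets, counting the order of $e^{i2\pi j/n}$) explicitly; this added detail is sound and faithful to the intended argument.
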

\begin{proof}
This also follows from Theorem 15 in \cite{MezicandBanaszuk:2004} if we take the state space to be a discrete space $V$ of $n$ nodes, and $T$ as a random dynamical system on it. 
\end{proof}

Theorems \ref{the:1} and \ref{the:2} give us motivation to define a measure of complexity based on the structure of recurrent (i.e. cycle-containing) and non-recurrent sets. Here are the postulates that we use for defining  complexity, that is based on the properties of $T$:
\begin{enumerate}
\item Any graph that consists of disconnected single nodes has complexity equal to the sum of complexities of the nodes.
\item Any linear chain has complexity equal to the sum of the sum of complexity of the nodes and weights of the edges.
\item Complexity of a graph that has no non-recurrent part and $n$ nodes is measured as a distance of distribution of eigenvalues of $U$ to delta distribution at $1$ - called the spectral complexity - added to the sum of the complexity of the nodes.
\end{enumerate}
Note that in the definition of spectral complexity we use the notion of distance on the unit disk. There are a variety of choices that can be made, just like the choice of $L^1$ norm or $L^2$ norm on Lebesgue spaces.
We now describe definitions and algorithms for computation of complexity, with a specific choice of distance based on the Wasserstein metric.

\subsection{Definition of Spectral and Total Complexity of a Directed Graph}

In this section, we propose an algorithm for calculating the complexity of directed graphs using the spectral
properties of the matrix $R$. To construct the matrix for a graph, we start by removing all the sources and their corresponding edges until no sources are left. This is motivated by the notion that sources are elements that
contribute to complexity in a linear manner, and will be included in the complexity metric through the edge weights.
We note that, a source is a node with only outgoing edges (a disconnected node is not a source). In matrix terms,
every source contributes to a zero (generalized) eigenvalue. We then construct the edge weighted adjacency matrix
for the new graph that effectively captures the dynamics of the multivalued map $T$ (a random walk on the graph). Thus, the rows of this adjacency matrix are normalized, such that the sum of the elements in any
given row is $1$. This is achieved by dividing each element in the row by the sum of all the row elements. If the
row contains only zeros (the given node is a sink), we put a $1$ on the diagonal element in that row, i.e. we add a
self loop in a standard manner of associating a Markov chain with a graph. This changes the zero eigenvalue associated with that row to $1$. Note that the eigenvector associated
with this eigenvalue is constant on the connected component, and all the other eigenvalues and
eigenvectors remain unchanged. We call the resulting matrix R the {\it recurrence matrix}. As a corollary of Theorem \ref{the:1} we have that this matrix always has an eigenvalue $1$ associated with a
constant vector, and all of the remaining eigenvalues are distributed on the unit disk.

We now define a complexity measure on the class of recurrence matrices. For a $K\times K$ recurrence matrix, we
will define the least complex matrix to be the identity matrix (this matrix corresponds to a graph with no edges). This corresponds to the graph in which each node
only has a pure self-loop. We define complexity as the distance of the eigenvalue distribution of $R$ from the eigenvalue
distribution of the identity matrix.
Distance on distributions can be measured in different ways. Here we adopt an approach based on the Wasserstein
distance. For this, we first need to define a distance on the unit disk. We do this using polar coordinates $r$
and $\theta$,
considering the unit disk as the product space $I\times S^1$, where $I=[0,1]$. The distance on $I$ is the usual
one $d(r_1,r_2)=|r_1-r_2|$, while on $S^1$ we impose the discrete metric:
$$
d(\theta_1,\theta_2)=\left\{
\begin{array}{cc} 1 & \mbox{if} \ \theta_1\neq \theta_2, \\
                            0 & \mbox{if}  \ \theta_1= \theta_2. \\
\end{array}
\right.$$
Now, the normalized Wasserstein distance between the least complex eigenvalue distribution and the one with
eigenvalues $\lambda_i=(r_i,\theta_i), \ i=1,...,n$
is,
\begin{equation}
F=\frac{1}{K} \left(\sum_{i=1}^K (1-r_i)+\mathbbm{1}_{\{\theta\neq 0\}}(\theta_i)\right),
\label{eq2}
\end{equation}
where $K$ is the number of non-zero eigenvalues of the recurrence matrix $R$ and $\mathbbm{1}_{\{\theta\neq 0\}}$
is the indicator function on the set $\{\theta\neq 0\}$.  The following fact on the graph with least spectral complexity is obvious:
\vskip .3cm
{\bf Fact:} The graph with $K$ disconnected nodes has complexity $0$.
\vskip .3cm
The first term in the spectral complexity function~(\ref{eq2}) is a measure of the amount of ``leakage'' in the
graph. If one is performing a random walk on the graph then the leakage is a measure of the probability of
transition between nodes \cite{Metastab}. This term takes values between 0 (no leakage) and 1 (probability of
transition is $1$). In other words, the first term captures the decay in probability of a random walk and the second term captures the cycles. According to the above definition,  the maximally complex graph in some class should maximize both terms separately. Namely, the eigenvalues of such a graph would be radially as close to zero as the class definition allows, and would have the maximal number of eigenvalues off the positive real line inside the unit disc, thus maximizing the second term. 
The following result indicates how the maximum spectral complexity of a graph is achieved if the graph family is not restricted:
\begin{theorem}  Let $R$ be a $K\times K$ recurrence matrix of a $K$-node graph. Then maximal spectral complexity $F$ is achieved for a matrix with constant entries.
\label{the:3}
\end{theorem}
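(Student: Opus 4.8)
The plan is to read $F$ as an average of independent per-eigenvalue contributions and to optimize it eigenvalue by eigenvalue. Writing the spectrum of $R$ (with multiplicity) as $\lambda_i=(r_i,\theta_i)$, $i=1,\dots,K$, we have
\be
F=\frac{1}{K}\sum_{i=1}^{K}\Big[(1-r_i)+\mathbbm{1}_{\{\theta\neq 0\}}(\theta_i)\Big].
\ee
By the corollary of Theorem~\ref{the:1} every recurrence matrix carries the Perron eigenvalue $\lambda_1=1$, so $r_1=1$, $\theta_1=0$, and this eigenvalue contributes exactly $0$ no matter what $R$ is. First I would bound the contribution of each of the remaining $K-1$ eigenvalues: since a modulus satisfies $0\le r_i\le 1$ we have $1-r_i\le 1$, and the indicator is at most $1$, so no eigenvalue can contribute more than $2$. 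This already gives the upper bound $F\le 2(K-1)/K$.

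The next step is to identify when equality holds. The radial part $1-r_i$ is maximal exactly when $r_i=0$, while the cyclic indicator is maximal when $\lambda_i$ lies off the positive real axis; both are simultaneously extremal precisely at the origin. Hence the bound $F=2(K-1)/K$ can be attained only if all $K-1$ non-Perron eigenvalues coincide with $0$, i.e. the spectrum of $R$ is forced to be $\{1,0,\dots,0\}$. It then remains to exhibit a recurrence matrix with this spectrum. The $K\times K$ row-stochastic matrix with all entries equal to $1/K$ equals $\tfrac1K J$ with $J$ the all-ones matrix; it has rank one, hence spectrum $\{1,0,\dots,0\}$, it is irreducible and aperiodic, and so is a legitimate recurrence matrix attaining the bound. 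This proves that the maximum is achieved by a constant-entry matrix.

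The step I expect to be the main obstacle, and the one I would pin down before anything else, is the bookkeeping for the zero eigenvalue in the cyclic term. The argument above rewards the origin in \emph{both} terms, which requires adopting the convention that a zero eigenvalue is counted as off the positive real axis (indicator value $1$); the opposite convention makes the origin contribute only $1$ and breaks the clean characterization of equality. This matters because the two terms otherwise pull against each other: driving moduli to $0$ (to maximize leakage) and pushing eigenvalues away from $\theta=0$ (to maximize the cyclic count) are reconciled only at the origin. I would therefore fix this convention explicitly at the outset, and also reconcile it with the normalization constant $K$ (the number of eigenvalues actually counted); if zero eigenvalues are instead excluded from the count, the optimization must be rerun over the configuration of nonzero eigenvalues, and verifying that the constant matrix still wins is the genuinely delicate part of the proof.
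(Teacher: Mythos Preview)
Your proof is correct and follows the same line as the paper's: both argue that the Perron eigenvalue contributes $0$, each remaining eigenvalue contributes at most $2$, and the constant-entry matrix has spectrum $\{1,0,\dots,0\}$ saturating the bound $2(K-1)/K$. The paper's proof is terser---it simply exhibits the constant matrix, writes down its $K-1$ zero eigenvalues (with explicit eigenvectors), and asserts that these maximize both sums---while you spell out the upper-bound step and the equality case explicitly.

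On the point you flagged as the main obstacle: the paper resolves it exactly as you anticipated, stating ``Note that these are counted as $\theta\neq 0$ eigenvalues,'' i.e.\ the zero eigenvalue is assigned indicator value $1$. So your convention is the intended one, and with it your argument goes through cleanly. Your secondary worry about whether $K$ in the normalization denotes the matrix size or the number of nonzero eigenvalues is a genuine inconsistency in the paper's text (the definition of $F$ says the latter), but the paper's own proof of this theorem clearly treats $K$ as the matrix size and sums over all $K$ eigenvalues including the zeros, which is the reading you adopted.
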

\begin{proof} A recurrence matrix $R$ with constant entries has  $K-1$ zero eigenvalues corresponding to eigenvectors that have $-1$ at $j$-th component  and $1/(K-1)$ for all other components. Note that these  are counted as $\theta\neq 0$ eigenvalues. Since $1$ is always an eigenvalue,  the resulting eigenvalues maximize both the first and the second sum in $F$, making it $2(K-1)/K$.
\end{proof}

From the above theorem, it is clear that  graphs with a large number of nodes have maximal complexity very close to $2$. This is evident in the example we present in the next section.

If the entries of $R$ are such that it forms a random Markov matrix \cite{Bordenaveetal:2012}, then, as we prove next, the complexity increases to maximal complexity as the size of the matrix increases. 
\begin{theorem} \label{the:rand} Let $\beta_{jk},j,l\geq 1$ be i.i.d random variables with bounded density, mean $m$ and finite positive variance $\sigma^2$. Every realization of $\beta_{jk},j,l\leq K$ gives a weighted directed graph. Let $R$ be a $K\times K$ recurrence matrix of such a $K$-node graph. Then  $F(R)\rightarrow 2$ as $K\rightarrow \infty$, with probability $1$. \label{the:4}
\end{theorem}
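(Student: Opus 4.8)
The plan is to recognize $R$ as the random Markov matrix studied in \cite{Bordenaveetal:2012} and then to control the two sums in $F$ separately: the radial sum via the circular law and the angular sum via a count of real eigenvalues. First I would note that, since the weights $\beta_{jk}$ have a bounded density and are non-negative, almost every realization produces a complete digraph --- every row sum is strictly positive, there are neither sources nor sinks, and the whole node set is recurrent. Writing $X=(\beta_{jk})$ and $D$ for the diagonal matrix of row sums, we then have exactly $R=D^{-1}X$, a row-normalized random Markov matrix. Because the entries have a density, the eigenvalues are almost surely simple and nonzero, so $K$ equals the matrix dimension, and Perron--Frobenius yields the simple eigenvalue $\lambda_1=1$ with all remaining eigenvalues of modulus at most $1$.

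Next I would invoke the circular-law theorem for random Markov matrices: after subtracting the rank-one mean (all entries $1/K$) and rescaling by $m\sqrt{K}/\sigma$, the empirical spectral distribution of the non-Perron eigenvalues converges almost surely to the uniform law on the unit disk. The consequence I need is simply that the unscaled empirical spectral distribution $\nu_K=\frac1K\sum_i\delta_{\lambda_i}$ converges weakly, almost surely, to the point mass $\delta_0$: the non-Perron eigenvalues are squeezed into a disk of radius $O(K^{-1/2})$, while the lone Perron atom at $1$ carries vanishing mass $1/K$.

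For the radial term this is enough. Since $R$ is stochastic, every eigenvalue lies in the closed unit disk, on which $\lambda\mapsto 1-|\lambda|$ is bounded and continuous; weak convergence $\nu_K\Rightarrow\delta_0$ therefore gives $\frac1K\sum_i(1-r_i)=\int(1-|\lambda|)\,d\nu_K\to 1-|0|=1$ almost surely. (Concretely, all but the Perron eigenvalue have $r_i=O(K^{-1/2})$ and the Perron eigenvalue contributes $1-r=0$, so the average of $1-r_i$ is forced to $1$.)

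The delicate part, and the step I expect to be the main obstacle, is the angular term $\frac1K\sum_i\mathbbm{1}_{\{\theta\ne0\}}(\theta_i)$. Here weak convergence is powerless: the indicator of $\{\theta\ne0\}$ is discontinuous exactly along the positive real axis, and the limiting measure $\delta_0$ sits at the origin, where the angle is undefined. What is genuinely needed is that the fraction of eigenvalues on the positive real axis vanishes, for which it suffices to bound the total number of real eigenvalues. My plan is to use the random-matrix fact that a real matrix with i.i.d.\ finite-variance entries has only $O(\sqrt{K})$ real eigenvalues in expectation, to transfer this bound to $R$ (which differs from the scaled i.i.d.\ model only through the rank-one mean and the row normalization, both of which a universality/comparison argument should absorb), and to upgrade it to an almost-sure $o(K)$ statement by a concentration and Borel--Cantelli argument along $K\to\infty$. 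Granting this, at most $o(K)$ eigenvalues have $\theta=0$, so the angular term tends to $1$ almost surely, and adding the two limits gives $F(R)\to 1+1=2$ almost surely.
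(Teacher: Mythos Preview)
Your approach is essentially the paper's: both rest on Theorem~1.3 of \cite{Bordenaveetal:2012} (the circular law for random Markov matrices), and both deduce the radial limit $\frac{1}{K}\sum_i(1-r_i)\to1$ from the fact that the non-Perron spectrum of $R$ concentrates in a disk of radius $O(K^{-1/2})$. Your preliminaries (complete graph almost surely, no sources or sinks, $R=D^{-1}X$, simple nonzero eigenvalues) are spelled out more carefully than in the paper, which simply asserts irreducibility.

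The one substantive difference is the angular term. You correctly observe that $\mathbbm{1}_{\{\theta\ne0\}}$ is discontinuous along the nonnegative real axis and that the unscaled limit $\delta_0$ sits exactly where the angle is undefined, so weak convergence $\nu_K\Rightarrow\delta_0$ alone does not suffice. But the fix is simpler than the real-eigenvalue count you propose. Work instead with the \emph{scaled} empirical measure $\mu_{\sqrt{K}R}$, whose almost-sure weak limit is the uniform law on the disk $\{|z|\le\sigma/m\}$. The nonnegative real axis has zero two-dimensional Lebesgue measure and is therefore a continuity set for this limit; by the Portmanteau theorem, $\mu_{\sqrt{K}R}(\{\theta=0\})\to0$ almost surely. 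Since scaling by $\sqrt{K}$ preserves arguments, the same holds for the eigenvalues of $R$ itself, and the angular sum tends to $1$. This is precisely the step behind the paper's terse ``this, in turn, implies''; your scepticism was well placed, but the obstacle dissolves once you keep the scaled limit rather than the collapsed one. Your alternative route via an $O(\sqrt{K})$ bound on real eigenvalues plus a transfer argument to the row-normalized model would also work in principle, but it is considerably harder and, as you acknowledge, the transfer step is nontrivial.
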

\begin{proof} The  recurrence matrix $R$ is a random Markov transition matrix \cite{Bordenaveetal:2012}
with the underlying Markov chain irreducible with probability $1$. Let 
\be
\mu_{\sqrt{K}R}=\frac{1}{n}\sum_{j=1}^{n}\delta_{\lambda_j},
\ee be the empirical measure supported on the location of eigenvalues of the matrix $\mu_{\sqrt{K}R}$,
where $\delta_{\lambda_j}$ is the Dirac delta function centered at eigenvalue $\lambda_j$.
Then, Theorem 1.3 in \cite{Bordenaveetal:2012} implies 
that $\mu_{\sqrt{K}R}$ converges to the uniform measure on the disk ${\cal U}_{\sigma/m}=\{z\in \C | z\leq \sigma/m\}$.
This, in turn, implies that the modulus of eigenvalues of $R$ goes to zero as $K\rar \infty$, and that 
\be
\displaystyle \lim_{K\rar \infty}\frac{1}{K} \left(\sum_{i=1}^K (\mathbbm{1}_{\{\theta\neq 0\}}(\theta_i)\right)=1
\ee
Also noting that $\displaystyle \lim_{K\rar \infty}(K-1)/K=1$, we conclude the proof.
\end{proof}

The above result is interesting in the context of numerical tests that we do in section \ref{sec:compgraphenergy}, which show random graphs of increasing size whose complexity converges to $2$, and in the section \ref{largenet}, where most of the eigenvalue distributions for several web-based networks are within a disk in the complex plane, but a small proportion is not, indicating the non-random nature (and lower complexity) of these networks.

The use of the ``counting" of eigenvaues with $\theta_j\neq 0$ in the second term of $F$ makes the spectral complexity measure have some features of discrete metrics, as the following example shows:
\begin{example}[Spectral complexity in a class of recurrent 2-graphs] 
\label{2-graph}
We consider graphs with 2 elements that have both a self loop and an edge connecting them to the other element, with uniform probabilities as shown in figure \ref{fig:2Graph}.

\begin{figure}[!h]
\begin{center}
\includegraphics[clip=true, trim=100 150 100 150, scale=0.3]{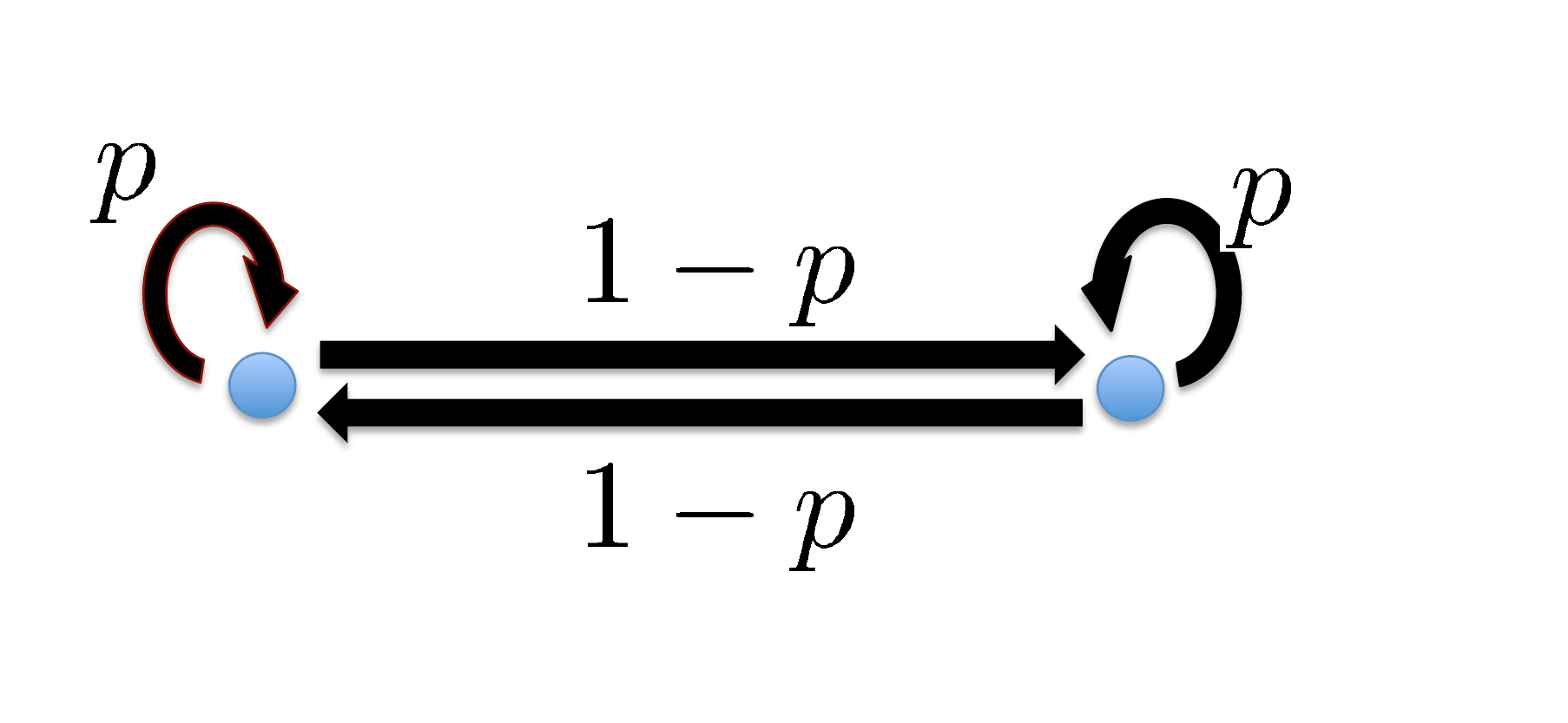}
\end{center}
\caption{\label{fig:2Graph} Graphical representation of the family of graphs with two nodes, equal strength self loops, and equal strength connecting edges.}
\end{figure}

Such a system has $R$ of the form:
\begin{equation}
R=\begin{bmatrix}
p & 1-p \\
1-p & p
\end{bmatrix}
, \end{equation}
where $p\in[0,1]$. The eigenvalues $\lambda$ of $R$ satisfy  the equation 
\be
(p-\lambda)^2=(1-p)^2.
\ee
One solution comes from,
\be
p-\lambda=1-p \ \Rightarrow \lambda=2p-1,
\ee
and the other from,
\be
p-\lambda=p-1 \ \Rightarrow \lambda=1.
\ee
For $p<1/2$ the self loop is weaker than the edge connecting to the other node, and for $p>1/2$ the opposite is true. The spectral complexity is
\be
F=\left\{
\begin{array}{cc} (1-|2p-1|+1)/2=(1+2p)/2, & \mbox{if} \ \ p\leq 1/2 \\
                            (1-(2p-1))/2 = 1-p & \mbox{if}  \ \ p> 1/2 \\
\end{array}\right.
\ee
Spectral complexity of this class of graphs distinguishes between graphs that have stronger self-interaction than interaction between the nodes, characterized by $p>1/2$ and the graphs in which the interaction between the nodes is stronger than the self-interaction.
Note that spectral complexity is discontinuous at $p=1/2$. This is in line with the behavior of the underlying Markov chain: for $p>1/2$ any initial probability distribution on the chain will decay exponentially and monotonically to the uniform distribution. For $p<1/2$, the decay of the distribution assumes oscillatory manner, thus representing a qualitative, discontinuous change in behavior. Note that for $p=1/2$ the complexity measure shows features of a discrete metric. Thus, the discontinuity in the complexity metric accurately captures the transition from the more complex oscillatory evolution of the distribution to the invariant measure (for $p\leq1/2$) versus the less complex monotonic convergence to the invariant measure for $p>1/2$. We note that the oscillatory nature of the distribution, in the more complex case, corresponds to strong interaction between nodes (since $p\leq 1/2$). This is in contrast with the weak interactions between nodes in the $p>1/2$ case, whereby the graph interactions are less important when compared to the self interaction of nodes.
\end{example}

\subsection{Physical intuition for complexity metric and meaning of eigenfunctions of the recurrence matrix for the network behavior}
Spectral objects associated with undirected graphs -such as the Fiedler eigenvalue, that is associated with speed of mixing of the associated Markov chain and reflects connectivity of the underlying graph, and the Fiedler vector, whose components indicate subgraphs that have strong internal connectivity but weak interconnectivity -  often have impact on the physical understanding of the network. The same is true for the eigenvalues and eigenvectors of the matrix $R$. They have strong correlation with the structural properties of the underlying graph. For example, existence of a real eigenvalue $0>\lambda\geq -1$ indicates that the network can be split into two subnetworks that have weak internal connectivity but strong interconnectivity between two subnetworks (see Example \ref{2-graph}). Also, the associated eigenvector values can be clustered into two separate sets that indicate the mentioned subgraphs. Both the simple example \ref{2-graph}, and the large graph Wikipedia example in the section \ref{largenet} provide evidence for this statement. Analogously, an eigenvalue set $\lambda_1,\lambda_2.\lambda_3$, whose arguments are close to $(0,\pi/3,2\pi/3)$ indicates that the graph possesses 3 subgraphs with weak internal and strong connectivity between the 3 subgraphs. An example of this is shown in the section \ref{largenet} for the Gnutella network. 

The complexity metric has the above spectral elements as part of the metric. It speaks to the structural complexity of the graph, but it has a  physical meaning for the behavior of the network as well. As a simple example, consider the case of spring mass system illustrated in figure \ref{fig:springmass}. If we set $k_1=k_2$, the weight matrix is
\begin{equation}
R=\begin{bmatrix}
k_1 & k_{12} \\
k_{12} & k_1
\end{bmatrix}
\end{equation}
The associated recurrence matrix is then
\begin{equation}
R=\begin{bmatrix}
p & 1-p \\
1-p & p
\end{bmatrix}
\end{equation}
where $p=k_1/(k_1+k_{12}).$

\begin{figure}[!h]
\begin{center}
\includegraphics[clip=true, trim=0 0 0 0 , scale=0.3]{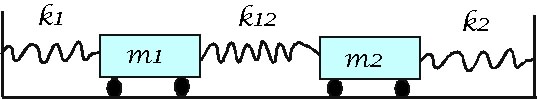}
\end{center}
\caption{\label{fig:springmass} Graphical representation of the mass-spring system.}
\end{figure}
Now assume $p=1$. This indicates a decoupled system, and the complexity of such system is clearly the smallest among all considered systems. For $p$ slightly smaller then $1$, the complexity is small, as the system is ``almost decoupled".  For $p=0$, the system has one eigenvalue at $-1$, indicating that the $2$ masses interact strongly, while there is no self-interaction for either mass.  It is physically intuitive that the highest complexity occurs for $p=1/2$, in which case the effects of both the spring attached to only one of the masses and the spring attached to both masses have equal influence on the individual mass motion. It is also intuitive that the situation with $p=0$ is less complex - for example, in design considerations we do not need to take into account   the properties of two of the springs. 
This intuition carries over to other examples. If we take three masses with no self interaction, but connected by springs, there is a double eigenvalue at $-1$ and thus its complexity is larger than that of the 2-mass system. The more balanced the self connectivity is with the connectivity to other nodes the more complex tasks like engineering design will become. It is sometimes argued that networks with full connectivity are simpler to analyze, but this comes from a statistical mechanics approach to the problem. In a design engineer or maintenance engineer world, adding an edge in the device or network design {\it always} increases the complexity of the resulting system.

The above discussion introduces a way of measuring the complexity of the recurrent part of a directed graph, and points to the intuitive aspects of the definition. But, complexity of the graph is not solely a function of the recurrence and cycles. Namely, more components in a graph, and more edges between non-recurrent nodes contribute to complexity as well - and we assume they do so in a linear fashion. 
Thus, if for a particular application we need to take into account the weights of nodes and the weights of the removed edges while removing sources,
the total complexity $C$ can be formulated in the following way:
\begin{equation}
C=\frac{\gamma\Big(\displaystyle\sum_{i=1}^N \alpha_i + \sum_{j=1}^M \beta_j\Big) + WF}{1+W},
\label{eq3}
\end{equation}
where $W$ is the user-defined weighting parameter for the spectral complexity in the total complexity metric that can take any
value from $\left[0,\infty\right)$. $N$ is the initial total number of nodes, $\alpha_i$'s are the complexity of
the individual nodes,\footnote{This is obtained either as user input or by some measure of complexity of dynamics on the individual node - e.g. through the use of the spectral distribution associated with the Koopman operator of the dynamical system \cite{MezicandBanaszuk:2004}.} $M$ is the number of edges removed while removing source nodes and $\beta_j$'s are the
weights of the edges that were excluded in the source nodes removal step. $F$ is given by equation (\ref{eq2}) and
$\gamma$ is the scaling factor that arises due to the fact that the terms $\Big(\displaystyle\sum_{i=1}^N \alpha_i + \sum_{j=1}^M \beta_j\Big)$ and $F$ might have vastly different numerical values. One choice for $\gamma$ is the following:
\begin{equation}
\gamma=\frac{\mathbb{E}(F)}{\mathbb{E}\displaystyle \Big(\sum_{i=1}^N \alpha_i + \sum_{j=1}^M \beta_j\Big)}.
\label{eq:sc}
\end{equation}
Note that the expectation is taken over various configurations of the system, and thus the 
probability distribution on a collection of graphs must be given. An alternative choice is to replace the $\mathbb{E}$ operator with the nonlinear {\it max} operator in equation (\ref{eq:sc}).

\begin{example} [Unicyclic directed graphs] Consider the family of unicyclic connected graphs, with nodes $v_j,j=1,...,N$ and edges $e_{kj}=0, k<N-1, j\neq k+1$, $e_{j,j+1}=1,j=1,...,N-1$ and $e_{N,1}=1, e_{N,j}=0, j\neq 1$ (see figure \ref{fig:unicycle}). Let the complexity of individual nodes be $1$, and $\gamma=1$. Then the complexity is equal to $C=(2N+W)/(1+W)$ and increases monotonically with the size of the graph.

\begin{figure}[!h]
\begin{center}
\includegraphics[clip=true, trim=140 140 140 140, scale=0.26]{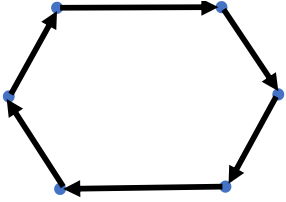}
\end{center}
\vspace{-1cm}
\caption{\label{fig:unicycle} Graphical representation of the family of unicycle directed graphs.}
\end{figure}
\end{example}

\subsection{Comparison with Graph Energy}
\label{sec:compgraphenergy}
In this section, we compare the spectral complexity introduced in this paper to graph energy. The notion of graph energy
\cite{graphenergy1, graphenergy2} emerged from molecular and quantum chemistry, where it has found use in
ranking proteins on the basis of the level of folding \cite{graphenergy-chem}. It has also been used as a metric
for complexity of graphs. The graph energy complexity, interestingly, does not peak for graphs with maximum
possible connections (the rank of the adjacency matrix for a complete graph is not maximum). Instead,
statistically the most complex graphs are those with $\approx 2/3$ possible connections \cite{graphenergyresults}.
Note that this complexity metric fails to capture directed cycles in the graph since one is forced to either work
only with undirected or symmetrized directed graphs, as demonstrated below.

The algorithm for calculating graph energy is as follows. At first, for a given graph, we construct the adjacency
matrix $M$:
\begin{equation}
M_{ij}=\left\{
         \begin{array}{ll}
           1, & \hbox{ for all edges $(i,j), i \neq j$ of the graph;} \\
           0, & \hbox{ otherwise.}
         \end{array}
       \right.
\label{eq5}
\end{equation}
The graph energy $C$ is calculated by using the following formula:
\begin{equation}
C=\Big(\frac{1}{|A|} \sum_{k=1}^{|A|} b_k \Big) \sum \mbox{SVD} (M), 
\label{eq6}
\end{equation}
where $b_{|A|}$ are edge weights, $|A|$ is the number of edges in the graph, $\mbox{SVD}(M)$ is a vector of singular
values of matrix $M$. Equation (\ref{eq6}) can  be used with symmetrized adjacency matrix $M^{(sym)}_{ij}=M_{ij}{\vee}M_{ji}$, where ${\vee}$
is the logical OR operator.

In the following, we present Figures \ref{complexity} and \ref{energy} to highlight the difference between the
complexity introduced in this paper and the graph energy. Random graphs were probabilistically constructed using
the following formula: the probability with which a node is connected to another node is given by
\begin{equation}
p=\frac{\hbox{Average Degree }}{\hbox{Number of Nodes}}.\nonumber
\end{equation}
All graphs considered have 1000 nodes. The average degree was varied from $1$ to $20$ in increments
of $1$ and then from $50$ to $1000$ in increments of $50$. The degree is defined as the number of outgoing edges
from each node. All weights of the edges are equal to 1. Each realization was repeated 10 times.

\begin{figure}[!h]
\begin{center}
\includegraphics[scale=0.6]{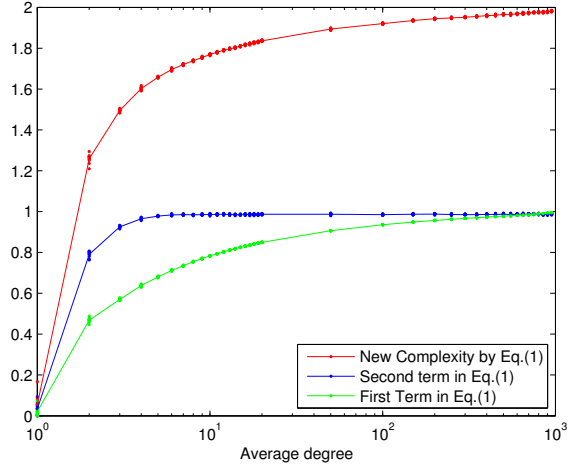}
\end{center}

\caption{\label{complexity} Complexity computed using equation (\ref{eq2}) (red), second term in equation (\ref{eq2}) (blue)
and first term in equation (\ref{eq2}) (green) as a function of the average degree of the graph. Each graph has 1000
nodes.}
\end{figure}

The spectral complexity increases fast with the average degree, reaching values of about 1.8 (out of the maximum
possible value of 2) at an average degree of about 20/1000 of the total number of nodes; it then continues to
increase monotonically, but less rapidly, with the average degree.

\begin{figure}[!h]
\begin{center}
\includegraphics[scale=0.6]{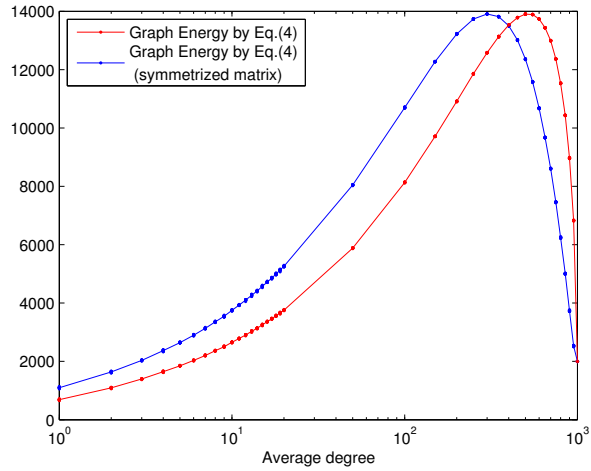}
\end{center}

\caption{\label{energy} Graph energy calculated by equation (\ref{eq6}) as a function of the average degree of the
graph. In the case when the matrix was symmetrized, the average degree relates to the initial non-symmetrized
matrix. Each graph has 1000 nodes.}
\end{figure}

In the case of the graph energy, as shown in Figure \ref{energy}, the maximum energy is reached when the average
degree is at about 50\% of the total number of nodes; then the graph energy starts to decrease.

This difference can be understood from the following argument. For simplicity, we take graphs 
with edge weights all equal to $1$. As shown in Theorem \ref{the:rand}, random graphs with  large average degree will statistically have eigenvalues
with modulus close to zero. Since graph energy is equal in this case to the sum of moduli of eigenvalues, the graph energy will be small.
In other words, small graph energy is in fact {\it indicative} of a large number of connections in the graph, and thus large, not small, complexity.
Namely, the key to decrease of energy of random graphs is the decrease in the moduli of the eigenvalues. In contrast, the metric F
counts the number of complex eigenvalues, that will, in the case of a random graph with large average degree tend to increase with the average degree.

In the case of graphs corresponding to engineered systems, there is no reason why the complexity should decrease
with increasing the number of connections (interdependencies) in the graph. Thus, we believe that the complexity
measure introduced in this paper is more appropriate for \textit{engineering and physical systems}. We note that the graph energy metric is more appropriate from an information theory standpoint.

We additionally note that in~\cite{berwanger}, the authors develop a complexity measure that is based on the entanglement of cycles in directed graphs. They compute this metric using a game theoretic approach (using a cops-and-robbers game). This reachability approach is similar in spirit to our spectral cyclomatic complexity measure. However, we note that, in general, computing $k$-entanglements scales as $O(n^{k+1})$, whereas our approach in general scales as $O(n^3)$, and much faster than that for sparse graphs. The spectral complexity captures the ``entanglements'' at all scales of the graph (for all $k$).  Moreover, unlike the approach in~\cite{berwanger}, our methodology leads to natural clustering of the graph that is discussed in the next section.

\section{Clustering of Directed Graphs}

\label{clust}

The clustering of undirected graphs is a well-developed area~\cite{chung_book, Tutorial,wave-clustering1} with several decades of research behind it. The area of clustering of directed graphs is far less developed. However, the analysis and clustering of directed graphs is slowly coming in vogue~\cite{communitydirected,chung,blockmodel1,blockmodel2}. In~\cite{chung}, the author generalizes random walk based Cheeger bounds to directed graphs. These bounds are related to the spectral cuts often used for graph partitioning~\cite{chung_book}. In~\cite{communitydirected}, the authors generalize Laplacian dynamics to directed graphs, resulting in a modularity (quality) cost function for optimal splitting. 

An alternative approach has focused on block modeling~\cite{blockmodel1,blockmodel2}. Under this methodology, nodes are grouped into classes that exist in an image graph. This assignment is performed based on node connectivity and neighbor properties. This approach assumes that a template image graph and roles (for the nodes) are supplied a priori. The graph is then fit onto the image graph using an optimization scheme~\cite{blockmodel1}. Although the approach extends to directed graphs, such image graphs are not always available in engineering or social systems. 

In the following, we introduce a new graph clustering approach that complements standard spectral methods for decomposing graphs. In particular, we construct a new algorithm that is based on computing the underlying cycles in the graph by computing the corresponding generating eigenvalues and eigenvectors. In particular, by decomposing the graph into these cycles, we aim to identify strongly interacting components in a directed graph. The method is compared to Cheeger and Laplacian dynamic based methods~\cite{chung,communitydirected}.


From the discussion leading to Theorem \ref{the:1}, we recognize that  cycling in a directed graph is associated with its recurrent part. Thus, we can use spectral properties -- and in particular complex eigenvalue pairs -- of the recurrence matrix $R$ in order to recognize cycles in a directed graph. Note that, according to Theorem \ref{the:1}, a set of complex eigenvalues with unit modulus always has a generator $e^{i2\pi/d}$. We extend this idea to eigenvalues off the unit circle and search for such generating eigenvalues.

In our algorithm, we seek the dominant cycle in a graph by identifying an eigenvalue (the generating eigenvalue) that is closest to a pure cycle on the unit circle.   The algorithm is as follows:  we compute nonzero eigenvalues $\lambda_j$ of $R$. We then compute the angles
$\alpha_j$ of the calculated eigenvalues in the complex plane and set
\begin{equation}
K_{min}=\displaystyle \mbox{argmin}_K \Big\{\frac{1}{K-1} \sum_{t=2}^K \mbox{min}_{j \in S} \Big|\exp(\frac{2\pi
i}{K} (t-1)) - \lambda_j)\Big|\Big\},
\label{eq4}
\end{equation}
where $K=2,\dots,N$, $N$ is the number of nonzero eigenvalues, $S$ is the set of eigenvalues for which
$(2\pi/K)\times (t-1.5) \le \alpha_j \le (2\pi/K)\times(t-0.5)$. If the set $S$ is empty, then the minimum in equation
(\ref{eq4}) is 1. We denote the number of clusters corresponding to the dominant cycle as $K_{min}$. Then we find the generating eigenvalue(s) and the
corresponding eigenvector(s). We choose $j$ such that $\pi/K_{min} \le \alpha_j \le 3\pi/K_{min}$. We want the
generating eigenvalue to be close to the case of a pure cycle of size $K_{min}$, when the generating eigenvalue is
at $2\pi/K_{min}$. We find the index of the first generating eigenvector as
$\mbox{argmin}_j|\lambda_j-\exp(\frac{2\pi i}{K_{min}})|$. Other generating eigenvalues are those that are within
a predefined threshold (we use $10^{-4}$ in our work) of the first generating eigenvalue.

For each generating eigenvector $v_j$ we compute angles $\phi_i$ in the range $[0,2\pi]$ for each element $1\leq
i\leq N$. Then we obtain graph clusters by partitioning coordinates of $v_j$ into $K_{min}$ groups by splitting
the unit circle into $K_{min}$ equal parts. Disconnected nodes and sinks are placed in separate clusters.

For example, the $7$ node graph (see Fig.~\ref{bins_val} (left)) with $6$ non-zero eigenvalues of the recurrence matrix (red points in
Fig.~\ref{bins_val} (right)) has $K_{min}=3$ clusters. The sector of the unit circle, that contains the generating
eigenvalue is between $\pi/K_{min}$ and $3\pi/K_{min}$ and is colored with green in Fig.~\ref{bins_val} (right). The generating eigenvalue is the non-zero eigenvalue which is closest to the eigenvalue of the pure cycle of size $K_{min}$.

\begin{figure}[!htb]
	\centering	
	\subfigure{\includegraphics[width=0.58\linewidth]{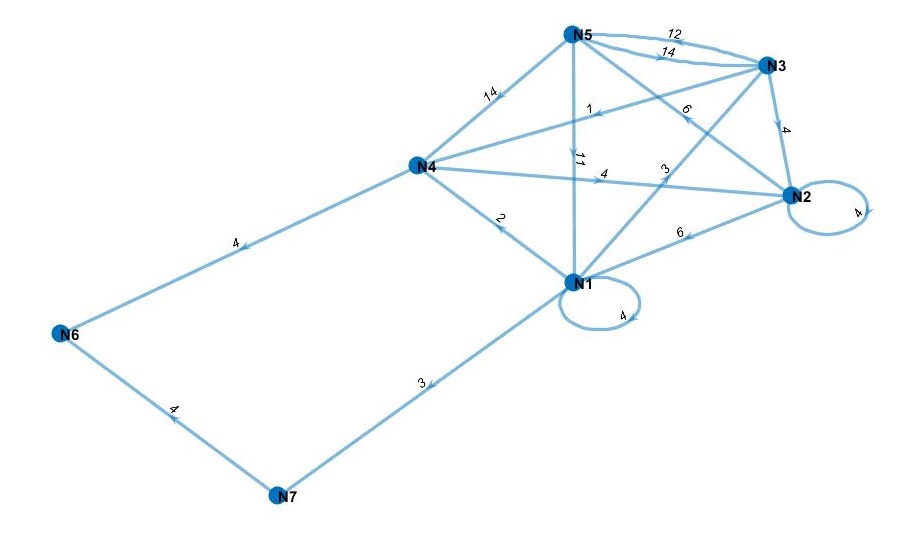}}\quad
	\subfigure{\includegraphics[width=0.39\linewidth]{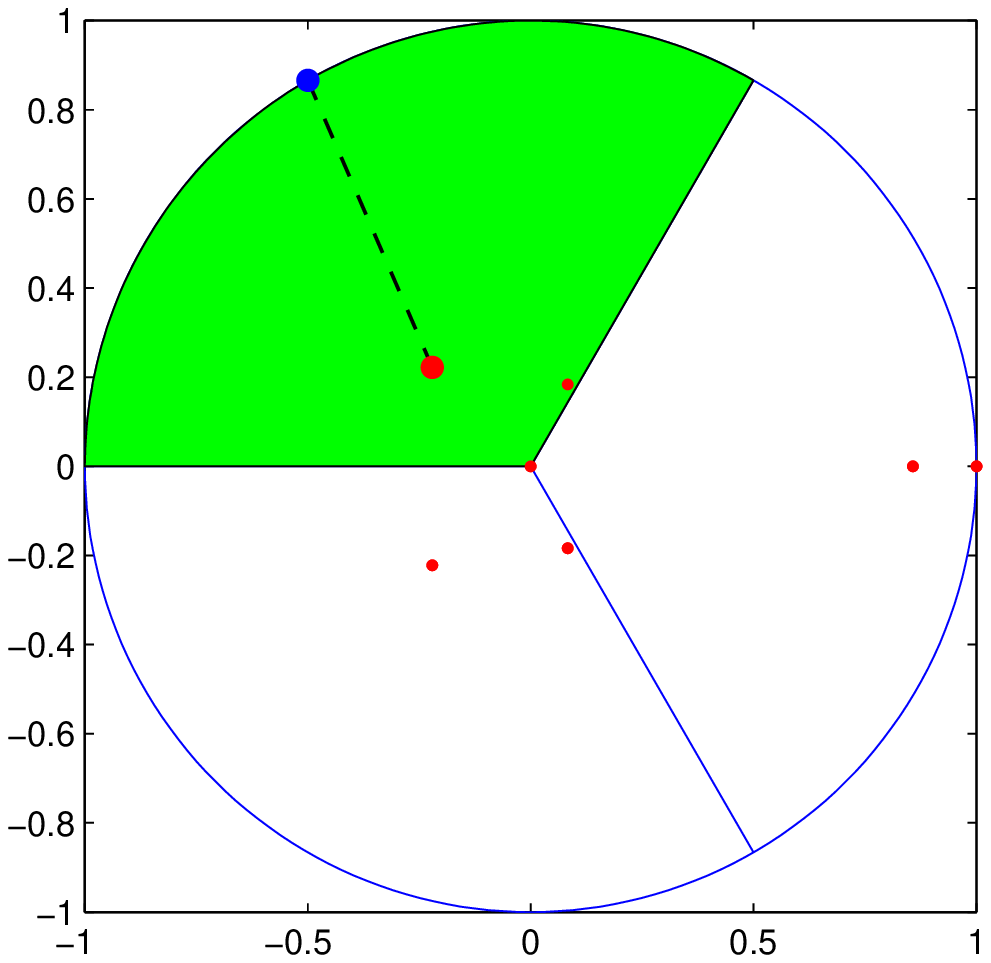}}
	\caption{An example graph (left); Eigenvalues of the recurrence matrix (right). The blue dot is the eigenvalue of the pure cycle of size 3. The big red dot is the generating eigenvalue as it is closest to the blue point within the green sector.}
	\label{bins_val}
\end{figure}

In previous work~\cite{jacobi,jacobigornerup} a method for identifying coarse-grained dynamics using aggregation of
variables or states in linear dynamical systems was developed. The condition for aggregation is expressed as a
permutation symmetry of a set of dual eigenvectors of the matrix that defines the dynamics. It is based on the
fact that the $n \times k$ aggregation matrix $\Pi$ reduces a (transition) matrix P describing a linear dynamical
system if and only if there exists a set of $k$ linearly independent vectors invariant under $P^T$, e.g. (left)
eigenvectors, that respect the same permutation symmetry group as $\Pi$. It is straightforward to identify
permutation symmetries in the invariant vectors of $P^T$. A permutation symmetry is realized through identical
elements in the vectors. Thus, by identifying the above permutation symmetries, one can group elements in a complex (directed) graph. In other words, the algorithm that we introduced above leads to a natural method for graph sparsification \cite{Cohenetal:2017}.

\section{Examples}

\subsection{Fixed Wing Aircraft Example}

To test both our clustering approach and the complexity metric, we consider the architecture of a fixed wing
aeroplane system~\cite{zeidner2}. This is a particularly important and relevant example since
recent analysis by the RAND Corporation concluded that the increase in cost of fixed wing aircraft is primarily
due to increased complexity \cite{randstudy}. An example of the impact of the complexity of fixed wing aircraft is the recent cost overruns of the F-35 platform~\cite{f35}.

The aerospace system considered in this work, consists of the following functional subsystems: aircraft engine,
fuel system, electrical power system (EPS), environmental control system (ECS), auxiliary power unit (APU), ram
cooler, and actuation systems. These subsystems may be connected to one another through various means. For example,
the engine may provide shaft power to the fuel system, the EPS and actuation system. Similarly, the APU may be
connected to the engine as it may be required to provide start up pneumatic power. Note that the interconnections
need not be electrical or mechanical in nature. Since the fuel system can be designed to absorb heat from the
actuation system and EPS, the dependencies of the subsystems may also be thermal. For a discussion on these systems we
refer the reader to~\cite{zeidner2}. An example architecture depicting the subsystems and their
interconnections is shown in Fig.~\ref{Fig:aero}.

\begin{figure}[!h]
\begin{center}
\includegraphics[scale=0.35,bb=0 0 493 528]{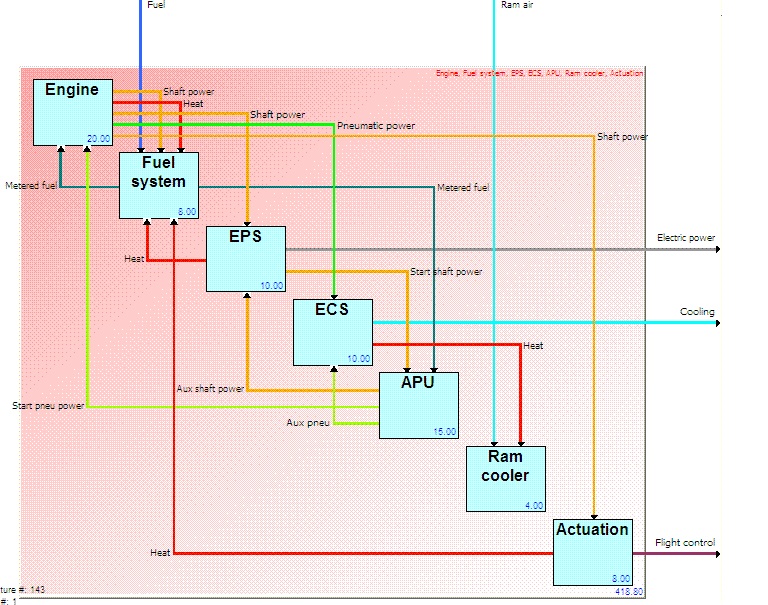}
\caption{Example architecture of a fixed-wing aeroplane system.}\label{Fig:aero}
\end{center}
\end{figure}

Traditionally, aerospace system architectures are specified by subsystems (such as EPS, ECS etc) and their
interconnections. The exploration of design space for these aerospace systems can be a particularly daunting and
challenging task. One possible approach to this problem has been to enumerate all feasible architectures and then
pick the most desirable one \cite{zeidner2}. It would appear that the exponential size of the design space would
make this enumeration task intractable. However, the feasible set is typically very sparse and generative filters
can be used to enumerate all the possible system designs \cite{zeidner2}.

In generative filters, one starts by defining the functional subsystems and then listing their interconnection
rules. Based on these rules one can efficiently identify all possible architectures \cite{zeidner2}. Using
generative filtering on the fixed wing aircraft system gives $27,225$ feasible architectures (significantly less
than the $2^{42}$ possible combinations of subsystem interconnection). One can now analyze and rank the resulting
architectures based on complexity and interdependencies.

After analyzing 27,225 configurations of a system, we show the most complex one and the least complex one, from
the definition of metrics in equation~(\ref{eq2}) and equation~(\ref{eq3}) with $W=\infty$. We compare results obtained by
using our spectral complexity with those obtained by using graph energy.

We compare our clustering results, with those obtained by using the Fiedler method, Cheeger bounds~\cite{chung}, and modularity maximization~\cite{communitydirected}. Our approach for the Fiedler method is as follows: at first for a given graph we construct the adjacency matrix $M$ according to equation (\ref{eq5}). Then we symmetrize the obtained matrix as $M^{(sym)}_{ij}=M_{ij}{\vee}M_{ji}$, where ${\vee}$ is the logical OR operator. After that we find the Laplacian matrix $L=D-M^{(sym)}$, where $D$ is the degree matrix. In this matrix, rows sum to zero. The Fiedler approach is based on the second smallest eigenvalue and the corresponding eigenvector of the symmetric matrix $L$. In particular, the signs of the components of the corresponding eigenvector are used to partition the graph in two parts.

In the following, N1 will correspond to the \emph{engine}, N2 to the \emph{fuel system}, N3 to the \emph{EPS}, N4 to the \emph{ECS}, N5 to the \emph{APU}, N6 to the \emph{ram cooler}, and N7 to the \emph{actuation system}.

\subsubsection{High Complexity Architecture}

After analyzing all 27,255 configurations, the architecture number $26,940$ in Fig.~\ref{conf26940} was found to be the most complex. The eigenvalues for the graph are displayed in Fig. \ref{eig26940}.

\begin{figure}[!h]
\begin{center}
\includegraphics[scale=1]{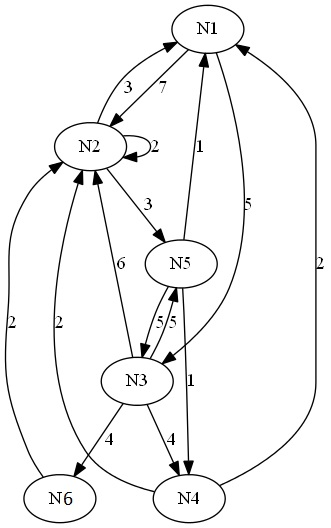}
\caption{Graph configuration 26,940. Edge weights are shown next to the edges. Node 1 has weight 20, Node 2 has
weight 8, Node 3 has weight 10, Node 4 has weight 10, Node 5 has weight 15, Node 6 has weight
4.}\label{conf26940}
\end{center}
\end{figure}

\begin{figure}[!h]
\begin{center}
\includegraphics[scale=0.5]{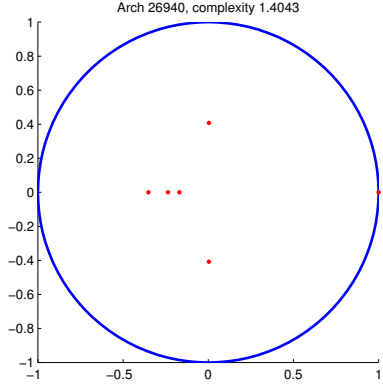}
\end{center}
\caption{\label{eig26940} Eigenvalues for high complexity architecture.}
\end{figure}

The complexity for this graph by using equation (\ref{eq2}) and $W=\infty$ in equation (\ref{eq3}) is equal to 1.4043. The
complexity for the random graph with the same number of nodes and average degree by using equation (\ref{eq2}) and
$W=\infty$ in equation (\ref{eq3}) is equal to 0.9237. The complexity predicted by equation (\ref{eq2}) for the high
complexity graph is about 152\% of the value of complexity predicted in expectation by the same equation for a random graph.
The complexity for this graph by using equation (\ref{eq2}) and $W=1$ in equation (\ref{eq3}) is equal to 1.2012.

The above complexity can be motivated from a ``system cycle'' standpoint. In particular, in Fig.~\ref{conf26940} the cycles are,
\begin{enumerate}
\item Fuel System $\rightarrow$ Fuel System (self-loop)
\item Engine $\rightarrow$ Fuel System $\rightarrow$ Engine
\item Engine $\rightarrow$ Fuel System $\rightarrow$ APU $\rightarrow$ Engine
\item Fuel System $\rightarrow$ APU $\rightarrow$ EPS $\rightarrow$ Fuel System
\item Fuel System $\rightarrow$ APU $\rightarrow$ EPS $\rightarrow$ Ram Cooler $\rightarrow$ Fuel System
\item Fuel System $\rightarrow$ APU $\rightarrow$ EPS $\rightarrow$ ECS $\rightarrow$ Engine.
\end{enumerate}
These cycles capture energy, fuel, and data flows and interactions. We note that increased interactions amongst aircraft subsystems can be related to reduced efficiencies and failures~\cite{rosero2007moving}. Thus, multiple intersecting cycles with several nodes give rise to higher complexity systems since failure in single subsystems would propagate through the cycles and across thereby requiring additional redundancies for safety.

The nodes form the following clusters: cluster 1 contains Nodes 1 (engine), 4 (ECS), and 6 (ram cooler); cluster 2 is Node 2 (fuel system), cluster 3 is
Node 3 (EPS); and cluster 4 is Node 5 (APU). Here we note that the single node clusters are ones that co-occur in multiple cycles. By visual inspection one can see the ``leaky'' (in the sense that eigenvalues corresponding to it are at a large distance from the unit circle) 4-cycle composed of the clusters; the system cycles through the 4-cycle giving rise to high complexity. This leakiness naturally arises due to the interactions of the various cycles (enumerated above) at common nodes such as Fuel System, APU etc.

The energy for this graph by using equation (\ref{eq6}) is equal to 28.3401 (sum of singular values is equal to
7.9352). If the matrix is symmetrized, then the energy for this graph by using equation (\ref{eq6}) is equal to 33.9041
(sum of singular values is equal to 9.4931). 

By using the Fiedler method the graph is divided into the following clusters: cluster 1 contains Nodes 2 (fuel system), 3 (EPS) and 6 (ram cooler);
cluster 2 contains Nodes 1 (engine), 4 (ECS) and 5 (APU), which neither captures strongly connected components nor critical nodes that co-occur in multiple cycles.

Using a Cheeger bound approach~\cite{chung}, we find that the above graph is split into two groups. Cluster 1 contains nodes  $\left[1,2\right]$ and cluster 2 contains nodes $\left[3 , 4,  5 , 6\right]$. The spectral approach for modularity maximization (by analyzing the leading eigenvector) yields a clustering where nodes $\left[1,2,4,6\right]$ are in the first cluster and nodes $\left[3,5\right]$ lie in cluster 2. Neither of these methods capture the visually evident cycling behavior. We now contrast this architecture with one of low complexity as identified by our approach.

\subsubsection{Low Complexity Architecture}

After analyzing all 27,255 configurations as above, the architecture number $1,160$ in Fig.~\ref{conf1160} was found to be the least complex, not
counting very simple 
graphs containing mostly disjoint nodes after removing sources. The eigenvalues for the graph are displayed in Fig. \ref{eig1160}.

\begin{figure}[!h]
\begin{center}
\includegraphics[scale=0.45]{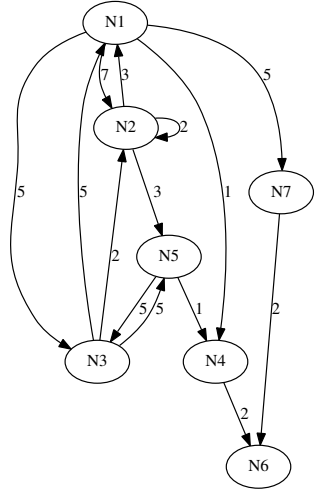}
\caption{Graph configuration 1,160. Edge weights are shown next to the edges. Node 1 has weight 20, Node 2 has
weight 8, Node 3 has weight 10, Node 4 has weight 10, Node 5 has weight 15, Node 6 has weight 4, Node 7 has weight
8.}\label{conf1160}
\end{center}
\end{figure}

\begin{figure}[!h]
\begin{center}
\includegraphics[scale=0.5]{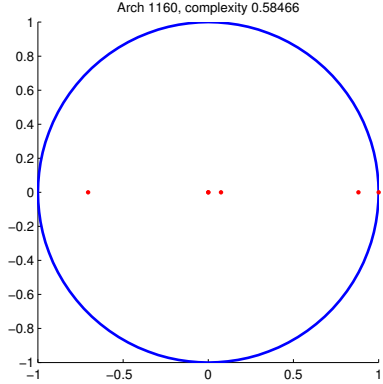}
\end{center}
\caption{\label{eig1160} Eigenvalues for low complexity architecture.}
\end{figure}

The complexity by using equation (\ref{eq2}) and $W=\infty$ in equation (\ref{eq3}) is equal to 0.5847. The complexity for
the random graph with the same number of nodes and average degree by using equation (\ref{eq2}) and $W=\infty$ in equation
(\ref{eq3}) is equal to 0.8136. The complexity predicted by equation (\ref{eq2}) for the low complexity graph is about
71\% of the value of complexity predicted in expectation by the same equation for a random graph.
The complexity by using equation (\ref{eq2}) and $W=1$ in equation (\ref{eq3}) is equal to 0.8195.

As in the previous case, the complexity can again be motivated from a ``system cycle'' standpoint. In particular, in Fig.~\ref{conf26940} the cycles are,
\begin{enumerate}
\item Fuel System $\rightarrow$ Fuel System (self-loop)
\item Engine $\rightarrow$ Fuel System $\rightarrow$ Engine
\item Engine $\rightarrow$ EPS $\rightarrow$ Engine
\item APU $\rightarrow$ EPS $\rightarrow$ APU
\item Fuel System $\rightarrow$ APU $\rightarrow$ EPS $\rightarrow$ Fuel System.
\end{enumerate}
Compared to the architecture with higher complexity, we see that this example has only $5$ cycles versus $6$ in the previous one. Additionally, the cycles in the higher complexity architecture have more nodes (hops) when compared to the low complexity architecture. Thus, the previous architecture had a higher complexity when compared to the current one despite the fact that the current example has one additional node (7 nodes) when compared to the previous one (6 nodes).

The nodes form the following clusters: cluster 1 contains Nodes 1 (engine) and 5 (APU); cluster 2 contains Nodes 2 (fuel system) and 3 (EPS). It is easy to check that these nodes generate the cycles in the graph. The
eigenvalues indicate a ``leaky''  two cycle with these two clusters. Nodes 4 (ECS), 6 (ram cooler) and 7 (actuation systems) are sinks. These
unidirectional connections lower the complexity of the system.

The energy for this graph by using equation (\ref{eq6}) is equal to 25.6040 (sum of SVDs is equal to 7.2359). If the
matrix is symmetrized, then the energy for this graph by using equation (\ref{eq6}) is equal to 34.8340 (sum of SVDs is
equal to 9.8444).
Thus, in contrast to spectral complexity, they are not much different in values obtained for the high complexity architecture. Note that the self loop of node 2 is not included in the energy
calculation.

Using a Cheeger bound approach~\cite{chung},  we find that the clustering approach finds no partition. The spectral approach for modularity maximization (by analyzing the leading eigenvector) and Fiedler method both yield a clustering where nodes $\left[1,2,3,5\right]$ are in the first cluster and nodes $\left[4,6,7\right]$ lie in cluster $2$. Once again, these methods do not capture the cycling behavior.

\subsection{Large Network Examples}
\label{largenet}
In this subsection we provide examples of calculating complexity and clustering for some large graphs.

\subsubsection{Wikipedia who-votes-on-whom network}

At first we consider the Wikipedia who-votes-on-whom network with $7,115$ nodes (\cite{web}). Nodes in the network
represent Wikipedia users and a directed edge from node $i$ to node $j$ represents that user $i$ voted for user
$j$. After removing sources, the network has 2,372 nodes. This is to be expected since most nodes are simply
voters that do not compete in elections (making them sources with no incoming edges). In Figure~\ref{wiki_adj}, we
show nonzero elements of the recurrence matrix. The multiplicity of $\lambda_{i}=0$ is $82$ and the multiplicity
of $\lambda_{i}=1$ is $1005$, which corresponds to 42.4\% of the total number of nodes. In Figure~\ref{wiki_eig},
we show all non-zero eigenvalues of the recurrence matrix.

\begin{figure}[!h]
\begin{center}
\includegraphics[scale=0.45,bb=0 0 560 420]{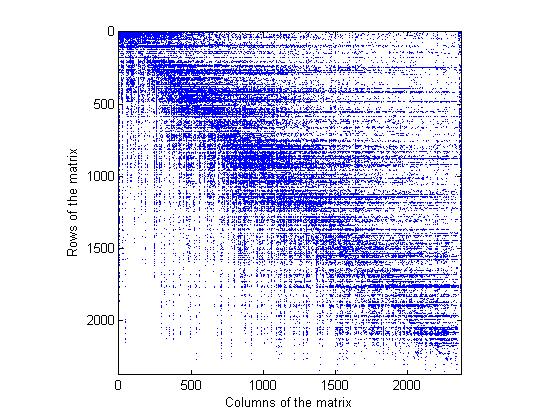}
\caption{Non-zero elements of adjacency matrix for Wikipedia who-votes-on-whom network after removing sources. The
number of non-zero elements of adjacency matrix is 57,650.}\label{wiki_adj}
\end{center}
\end{figure}

\begin{figure}[!h]
\begin{center}
\includegraphics[scale=0.5]{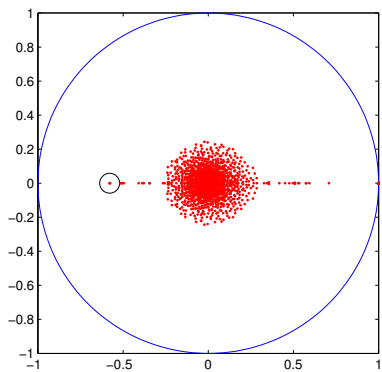}
\end{center}
\caption{\label{wiki_eig} Nonzero eigenvalues for Wikipedia who-votes-on-whom network after removing sources.}
\end{figure}

\noindent \textbf{Complexity}

\smallskip

\noindent The complexity obtained from equation (\ref{eq2}) is equal to 1.0418 (0.4938+0.5480). The complexity for the
random graph with the same number of nodes and average degree by using equation (\ref{eq2}) is equal to 1.8171
(0.8215+0.9956). The complexity predicted by equation (\ref{eq2}) for the Wikipedia who-votes-on-whom graph is about
57\% of the value of complexity predicted by the same equation for the random graph,
indicating an internal structure to the graph. Looking at the eigenvalue distribution shown in figure \ref{wiki_eig}, we see that it has the structure of randomly distributed eigenvalues inside a disk of small radius. We know from theorem \ref{the:3} that such distributions of eigenvalues yield high spectral complexity. There is also a set of eigenvalues away from that disk on positive and negative real line inside the unit disc. We next show, using clustering, that there is
internal structure corresponding to a low period -- namely period 2-cycle that contributes to an {\it eigenvalue on the negative real line that lowers complexity} over the maximally complex graph, or even a random graph.

\smallskip

\noindent \textbf{Clustering}

\smallskip

\noindent There are $56$ disjoint single nodes for Wikipedia who-votes-on-whom network which are not considered for clustering. The graph contains 1,016
sinks. The clustering was done for the strongly connected component. We obtained cluster  C1 of $622$ nodes and
cluster  C2 of $678$ nodes. The generating eigenvalue is -0.5792588, indicating a 2-cycle.


In Figure \ref{wiki_frac}, we plot the ratio of the number of edges going from cluster X to cluster Y to the number
of edges inside cluster X depending on the percentage of nodes in all clusters. The percentage of nodes in all
clusters is calculated as follows: we first sort the generating eigenvector in the ascending order. We then compute the fraction of nodes to keep such that the sum of the ratios is the maximum.

\begin{figure}[!h]
\begin{center}
\includegraphics[scale=0.6]{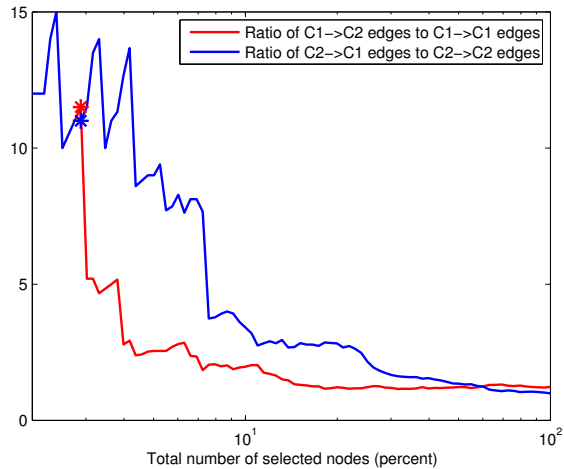}
\end{center}
\vspace{-0.5cm}
\caption{\label{wiki_frac} The ratio of the number of edges going from cluster X to cluster Y to the number of
edges inside cluster X depending on the percentage of nodes in all clusters for Wikipedia who-votes-on-whom
network. Cluster X can be cluster C1 or cluster C2 and cluster Y can be cluster C1 or cluster C2. The asterisk
shows the point where the sum of two ratios is the maximum.}
\end{figure}

In the following, we select such percentage of nodes in all clusters so that the sum of two ratios, plotted as
solid lines in Figure \ref{wiki_frac}, is the maximum. We mark the found point of 2.9\% with $*$ on Figure
\ref{wiki_frac}. The obtained graph is shown in Figure \ref{wiki_clust}, where nodes' numbers are numbers in the
graph before removing sources. The number of nodes in each cluster and the ratio of the number of edges between
clusters or inside the cluster to the number of nodes in the cluster are shown in Table \ref{table1}. The average
degree of this graph is $1.3243$,  calculated as the ratio of the total number of outgoing
edges from each cluster and edges inside each cluster to the total number of nodes in clusters. In the table, the
number in parenthesis shows the number of nodes in the corresponding cluster. Other numbers show the ratio of the
number of edges from X to Y to the number of nodes in X, where X can be cluster C1 and cluster C2 and Y can be
cluster C1 and cluster C2. As it can be seen from the table the biggest ratio is for C1 to C2 and for C2 to C1.

\begin{figure}[!h]
\begin{center}
\includegraphics[scale=0.3]{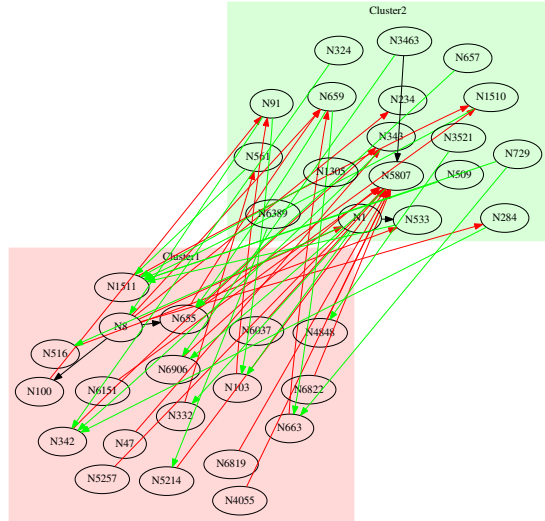}
\end{center}
\caption{\label{wiki_clust} Clustering for Wikipedia who-votes-on-whom network with 2.9\% of initial number of
nodes in both cluster C1 and cluster C2. Nodes labels are nodes numbers in the network before removing sources.
The nodes from cluster C1 are situated on light red background. The nodes from cluster C2 are situated on light
green background. The edges going from cluster C1 to cluster C2 are red, the edges going from cluster C2 to
cluster C1 are green, the edges inside clusters are black.}
\end{figure}

\begin{table}[!h]
  \centering
  \caption{The number of nodes in each cluster and the ratio of the number of edges between clusters or inside the
  cluster to the number of nodes in the cluster in Wikipedia who-votes-on-whom network with 2.9\% of initial
  number of nodes in all clusters.}\label{table1}
  \begin{tabular}{c|cc}
    X & X$\rightarrow$C1 & X$\rightarrow$C2 \\
    \hline
    C1 (19) & 0.1053 & \textbf{1.2105} \\
    C2 (18) & \textbf{1.2222} & 0.1111 \\
  \end{tabular}
\end{table}

The number of nodes in each cluster and the ratio of the number of edges between clusters or inside the cluster to
the number of nodes in the cluster in the case of 100\% of initial
  number of nodes in all clusters are shown in Table \ref{table2}. The average degree is
30.3508. As it can be seen from the table the biggest ratio is for C1 to C2.

\begin{table}[!h]
  \centering
  \caption{The number of nodes in each cluster and the ratio of the number of edges between clusters or inside the
  cluster to the number of nodes in the cluster in Wikipedia who-votes-on-whom network with 100\% of initial
  number of nodes in all clusters.}\label{table2}
  \begin{tabular}{c|cc}
    X & X$\rightarrow$C1 & X$\rightarrow$C2 \\
    \hline
    C1 (622) & 14.2910 & \textbf{17.5595} \\
    C2 (678) & 14.4130 & 14.5619 \\
  \end{tabular}
\end{table}

We also performed clustering for the strongly connected component by using the Fiedler method. We obtained cluster
1 of 1,280 nodes and cluster 2 of 20 nodes (a highly unbalanced cut). The table for the number of nodes in each cluster and the ratio of the
number of edges between clusters or inside the cluster to the number of nodes in the cluster are shown in Table
\ref{table3}. The number of edges between and inside clusters is calculated for the directed graph before the
symmetrization of the adjacency matrix. The smallest ratio is for C1 to C2, what reveals the weak connection from
C1 to C2. We see that the method is not capable of uncovering any strong internal structure in this directed graph.

\begin{table}[!h]
  \centering
  \caption{The number of nodes in each cluster and the ratio of the number of edges between clusters or inside the
  cluster to the number of nodes in the cluster in Wikipedia who-votes-on-whom network (Fiedler
  method).}\label{table3}
  \begin{tabular}{c|cc}
    X & X$\rightarrow$C1 & X$\rightarrow$C2 \\
    \hline
    C1 (1280) & 29.7891 & \textbf{0.5398} \\
    C2 (20) & 29.600 & 2.1500 \\
  \end{tabular}
\end{table}

\subsubsection{Gnutella peer to peer network}

In the following, we consider the Gnutella peer to peer network with $6,301$ nodes (\cite{web}). Nodes represent
hosts in the Gnutella network topology and edges represent connections between the Gnutella hosts. After removing
sources the network has 6,179 nodes. In Figure \ref{p2p_adj}, we show nonzero elements of the recurrence matrix.
There are 674 zero eigenvalues and 3,836 one eigenvalues, which are 62.0\% of the total number of nodes. In Figure
\ref{p2p_eig}, we show all non-zero eigenvalues of the matrix. We again see the structure similar to the Wikipedia network, but with even stronger indication of complexity indicated by the concentration of eigenvalues inside the disk of small radius. The eigenvector corresponding to the eigenvalue of
about 0.5 has zero components for sinks and the same sign nonzero components for nodes that are not sinks.

\begin{figure}[!h]
\begin{center}
\includegraphics[scale=0.45,bb=0 0 560 420]{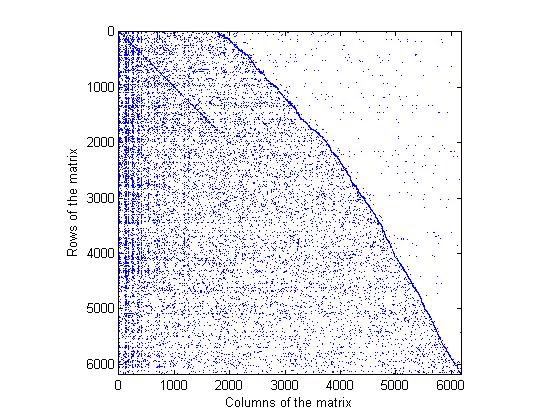}
\caption{Non-zero elements of adjacency matrix for Gnutella peer to peer network after removing sources. The
number of non-zero elements of adjacency matrix is 19744.}\label{p2p_adj}
\end{center}
\end{figure}

\begin{figure}[!h]
\begin{center}
\includegraphics[scale=0.5]{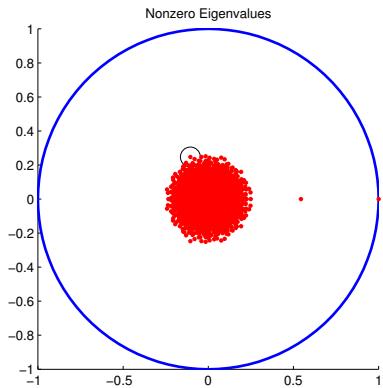}
\end{center}
\vspace{-0.5cm}
\caption{\label{p2p_eig} Nonzero eigenvalues for Gnutella peer to peer network after removing sources.}
\end{figure}

\smallskip

\noindent \textbf{Complexity}

\smallskip

\noindent The complexity by using equation (\ref{eq2}) is equal to $0.5638 (0.2661+0.2977)$. The complexity for the
random graph with the same number of nodes and average degree by using equation (\ref{eq2}) is equal to 1.5522
(0.5976+0.9546). Thus, the complexity predicted by equation (\ref{eq2}) for the Gnutella graph is about 36\% of the value of
complexity predicted by the same equation for the random graph, again indicating structure induced by a low-period cycle that we uncover next.

\smallskip

\noindent \textbf{Clustering}

\smallskip

\noindent There are $151$ disjoint single nodes in the Gnutella graph which are not considered for clustering. The graph contains 3,960
sinks. The clustering algorithm found the generating eigenvalue $-0.1054572+0.2470956i$ (see the circled eigenvalue in the figure \ref{p2p_eig}). From the associated generating eigenvector we obtained three clusters: cluster C1 of $659$ nodes, cluster C2 of $675$ nodes, cluster C3 of $734$ nodes.

In Figure \ref{p2p_frac} we plot the ratio of the number of edges going from cluster X to cluster Y to the number
of edges inside cluster X depending on the percentage of nodes in all clusters.

\begin{figure}[!h]
\begin{center}
\includegraphics[scale=0.6]{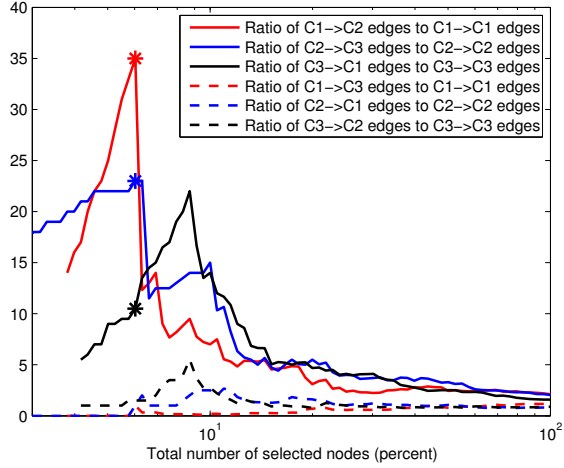}
\end{center}
\vspace{-0.5cm}
\caption{\label{p2p_frac} The ratio of the number of edges going from cluster X to cluster Y to the number of
edges inside cluster X depending on the percentage of nodes in all clusters for Gnutella network. Cluster X can be
cluster C1 or cluster C2 or cluster C3 and cluster Y can be cluster C1 or cluster C2 or cluster C3. The asterisk
shows the point where the sum of three ratios plotted as solid lines is the maximum.}
\end{figure}

In the following, we select such percentage of nodes in all clusters so that the sum of three ratios, plotted as
solid lines in Figure \ref{p2p_frac}, is the maximum. We mark the found point of 6.0\% with $*$ on Figure
\ref{p2p_frac}. After removal of nodes that become disjoint when the clusters were reduced in size this percentage
is 4.6. The obtained graph is shown in Figure \ref{p2p_clust}, where nodes' numbers are numbers in the graph
before removing sources. The number of nodes in each cluster and the ratio of the number of edges between clusters
or inside the cluster to the number of nodes in the cluster are shown in Table \ref{table4}. The average degree of
this graph is 0.9263. As it can be seen from the table the biggest ratios are for C1 $\rightarrow$ C2, C2
$\rightarrow$ C3, C3 $\rightarrow$ C1.

\begin{figure}[!h]
\begin{center}
\includegraphics[scale=0.3]{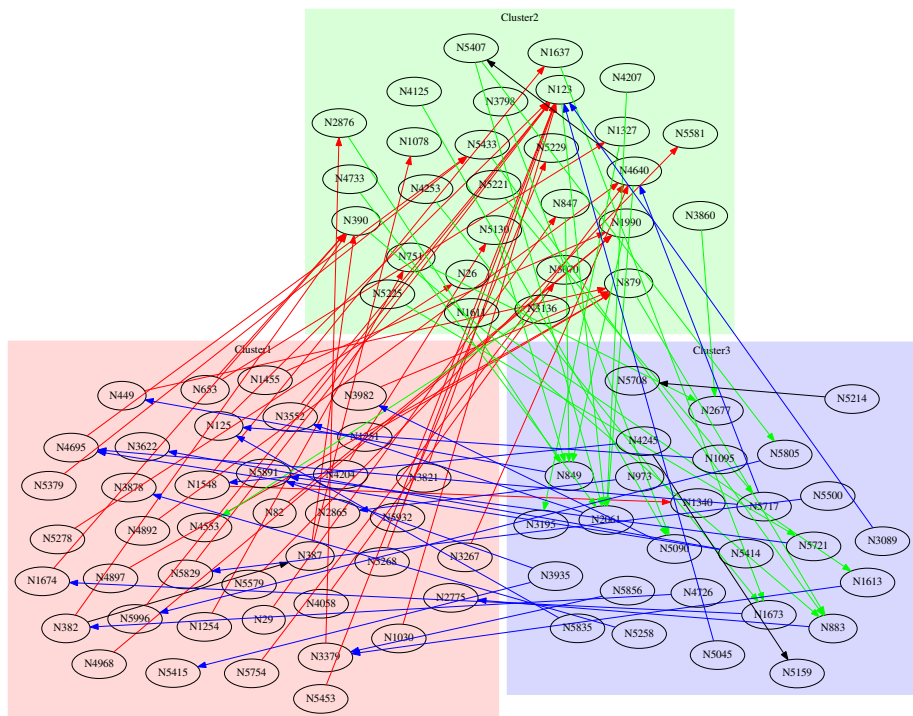}
\end{center}
\caption{\label{p2p_clust} Clustering for Gnutella peer to peer network with 4.6\% of initial number of nodes in
clusters C1, C2, C3. Nodes labels are nodes numbers in the network before removing sources. The nodes from cluster
C1 are situated on light red background. The nodes from cluster C2 are situated on light green background. The
nodes from cluster C3 are situated on light blue background. The edges going from cluster C1 are red, the edges
going from cluster C2 are green, the edges going from cluster C3 are blue, the edges inside clusters are black.}
\end{figure}

\begin{table}[!h]
  \centering
  \caption{The number of nodes in each cluster and the ratio of the number of edges between clusters or inside the
  cluster to the number of nodes in the cluster in Gnutella network with 4.6\% of initial number of nodes in all
  clusters.}\label{table4}
  \begin{tabular}{c|ccc}
    X & X$\rightarrow$C1 & X$\rightarrow$C2 & X$\rightarrow$C3\\
    \hline
    C1 (40) & 0.0250 & \textbf{0.8750} & 0.0250 \\
    C2 (28) & 0.0357 & 0.0357 & \textbf{0.8214} \\
    C3 (27) & \textbf{0.7778} & 0.1111 & 0.0741\\
  \end{tabular}
\end{table}

The number of nodes in each cluster and the ratio of the number of edges between clusters or inside the cluster to
the number of nodes in the cluster in the case of 100\% are shown in Table \ref{table5}. The average degree of
this graph is 4.5034. As it can be seen from the table the biggest ratios are for C1 $\rightarrow$ C2, C2
$\rightarrow$ C3, C3 $\rightarrow$ C1, but the ratio between them and other elements of the matrix is smaller than
in the 6\% case.

\begin{table}[!h]
  \centering
  \caption{The number of nodes in each cluster and the ratio of the number of edges between clusters or inside the
  cluster to the number of nodes in the cluster in Gnutella network with 100\% of initial number of nodes in all
  clusters.}\label{table5}
  \begin{tabular}{c|ccc}
    X & X$\rightarrow$C1 & X$\rightarrow$C2 & X$\rightarrow$C3\\
    \hline
    C1 (659) & 1.1153 & \textbf{2.3505} & 1.2337 \\
    C2 (675) & 0.9052 & 1.1037 & \textbf{2.2504} \\
    C3 (734) & \textbf{2.0763} & 1.1662 & 1.3093\\
  \end{tabular}
\end{table}

The clustering of the strongly connected component by using the Fiedler method gives cluster 1 of 1,878 nodes and
cluster 2 of 190 nodes. The number of nodes in each cluster and the ratio of the number of edges between clusters
or inside the cluster to the number of nodes in the cluster are shown in Table \ref{table6}. The number of edges
between and inside clusters is calculated for the directed graph before the symmetrization of the adjacency
matrix. As it can be seen from the table the smallest ratio is for C1 to C2, what reveals the weak connection from
C1 to C2. Again, the method fails to uncover the internal structure in the graph because the structure is of cycling type, and not of separate subgraph type.

\begin{table}[!h]
  \centering
  \caption{The number of nodes in each cluster and the ratio of the number of edges between clusters or inside the
  cluster to the number of nodes in the cluster in Gnutella network (Fiedler method).}\label{table6}
  \begin{tabular}{c|cc}
    X & X$\rightarrow$C1 & X$\rightarrow$C2 \\
    \hline
    C1 (1878) & 4.3679 & \textbf{0.2023} \\
    C2 (190) & 2.5632 & 1.2789 \\
  \end{tabular}
\end{table}

\section{Conclusions}

In this work we proposed a new, spectral, measure of complexity of systems and an associated spectral clustering
algorithm. This complexity measure (that we call \textit{spectral complexity}) is based on the spectrum of the
underlying interconnection graph of the subcomponents in the system. Spectral complexity is a natural engineering
extension to software complexity measures developed in \cite{mccabe}. We find that compared to competing
complexity measures (such as graph energy), spectral complexity is more appropriate for engineering systems. For example, one of its features is that the complexity monotonically increases with the average node degree. In addition, it properly accounts for structure and complexity features induced by cycles in a directed graph.  Using the spectral complexity measure, comparison of
complex engineered systems is enabled, potentially providing significant savings in design and testing.

Spectral complexity also provides an intuitive approach for clustering directed graphs. It partitions the graph
into subgroups that  map into one another. Our partitioning shows a strong cycling structure even for complex
networks such as Wikipedia and Gnutella, that the standard methodologies like the Fiedler vector partitioning do
not provide. 

Our methods are demonstrated on engineering systems, random graphs, Wikipedia and Gnutella examples. We find that
the high and low spectral complexity architectures uncovered by our methods correspond to an engineer's intuition of a highly complex vs. a low complexity architecture. Namely, the low complexity of the
engineered architecture is related to
more layers in its horizontal-vertical decomposition \cite{Mezic:2004,XuandLan:2015} i.e. with a graph structure
closer to acyclic.

It is of interest to note that the methods introduced here have proven to be of strong use in data-driven analysis
of dynamical systems \cite{Budisicetal:2012},
which should make it possible to combine the introduced measure of complexity with measure of dynamic complexity
for dynamical systems on networks.

\section{Acknowledgements}
We are thankful to an anonymous referee for comments that vastly  improved the presentation of the material over the original version.
The views expressed are those of the author and do not reflect the official policy or position of the Department
of Defense or the U.S. Government. Approved for Public Release, Distribution Unlimited.

This work was partially supported under AFOSR grant FA9550-17-C-0012 and under DARPA Contract FA8650-10-C-7080.

The authors thank the anonymous reviewers for their insightful feedback and suggestions.
\bibliographystyle{unsrt}
\bibliography{Complexity}
\end{document}